\documentclass[a4paper,11pt]{article}
\input{xypic}
\newcommand{\quotes}[1]{``#1''}
\usepackage{float,caption}
\usepackage{amsmath, amsthm, amssymb}
\usepackage[top=1in, bottom=1in, left=1in, right=1in]{geometry}
\usepackage{graphicx} 
\usepackage{epstopdf} 
\usepackage{mdframed}
\usepackage{ragged2e}
\usepackage{mathrsfs}
\usepackage[shortlabels]{enumitem}
\usepackage{tikz-cd}
\usepackage{setspace}
\usepackage{stmaryrd}

\newtheorem{thm}{Theorem}[section]

\newtheorem{lem}[thm]{Lemma}
\newtheorem{prop}[thm]{Proposition}
\newtheorem{defn}[thm]{Definition}
\newtheorem{rem}[thm]{Remark}
\newtheorem{exam}[thm]{Example}

\theoremstyle{definition}
\def\Int{\operatorname{Int}}
\def\Cl{\operatorname{Cl}}
\def\P{\mathcal{P}}
\def\T{\mathfrak{T}}

\def\I{\mathfrak{I}}
\def\J{\mathfrak{T}}

\begin{document}
	\setcounter{page}{1}
	\title{Soft Maximal Topologies}
	\author{Samer Al Ghour$^1$ and Zanyar A. Ameen$^2$\\
		$^1$Department of Mathematics and Statistics, Jordan University of Science and Technology,\\ Irbid 22110, Jordan;
		algore@just.edu.jo\\
		$^2$Deptartment of Mathematics, College of Science, University of Duhok,\\ Duhok 42001, Iraq, zanyar@uod.ac}
	\date{}
	\maketitle
	
	\begin{abstract}
In various articles, it is said that the class of all soft topologies on a common universe forms a complete lattice, but in this paper, we prove that it is a complete lattice. Some soft topologies are maximal and some are minimal with respect to specific soft topological properties. We study the properties of soft compact and soft connected topologies that are maximal. Particularly, we prove that a maximal soft compact topology has identical families of soft compact and soft closed sets. We further show that a maximal soft compact topology is soft $T_1$, while a maximal soft connected topology is soft $T_0$. Lastly, we establish that each soft connected relative topology to a maximal soft connected topology is maximal.

$\mathfrak{I}=\mathfrak{T}$
	\end{abstract}

	\section{Introduction}\label{sec1}
    The real world is far too complex for our instant comprehension. We construct \quotes{models} of reality that are simplified versions of reality. Unfortunately, these mathematical models are just too complex, and we are unable to obtain exact solutions. Traditional classical methods are ineffective for modeling problems in engineering, physics, computer sciences, economics, social sciences, medical sciences, and many other domains due to the unpredictability of data. This could be owing to the unpredictability of natural environmental occurrences, human knowledge of the real world, or the limitations of measurement tools. For instance, ambiguity or confusion on the border between states or between major cities, the precise population growth rate in a country's rural areas, or making judgments in a machine-based environment using database information. As a result, classical set theory, which is predicated on the crisp and accurate case, may not be totally adequate for dealing with such uncertainty concerns. Theory of fuzzy sets \cite{zadeh1996fuzzy}, theory of intuitionistic fuzzy sets \cite{atanassov1986intuitionistic}, the theory of vague sets \cite{gau1993vague}, the theory of interval mathematics \cite{atanassov1994operators}, and the theory of rough sets \cite{pawlak1995rough} are some of the theories. These theories might be seen as instruments for dealing with uncertainty, but each has its own set of problems. The insufficiency of the theory's parametrization tool, as highlighted by Molodtsov in \cite{molodtsov1999soft} could be the cause of these difficulties. He invented the word \quotes{soft set theory} to describe a new mathematical tool that is free of the issues discussed above. He stated the core results of the new theory in his paper \cite{molodtsov1999soft}, and effectively applied it to a variety of fields, including smoothness of functions, game theory, operations research, Riemann-integration, and probability theory. A \quotes{soft set} is a collection of approximations to an object's description.

General topology is the branch of topology that deals with the fundamental set-theoretic notions and constructions used in topology. It is the foundation of most other topics in topology, including differential topology, geometric topology, and algebraic topology. Soft topology, which combines soft set theory and topology, is another field of topology. It is concerned with a structure on the set of all soft sets and is motivated by the standard axioms of classical topological space. The work of Shabir and Nazs \cite{shabir2011soft}, in particular, was crucial in establishing the field of soft topology. After that various classes of soft topological spaces have been proposed, such as: soft compact \cite{aygunouglu2012some}, soft connected \cite{lin2013soft}, soft paracompact \cite{lin2013soft}, soft extremally disconnected \cite{asaad2017results}, and soft separable spaces \cite{bayramov2018new}, soft J-spaces \cite{mohammed2019soft}, soft Menger spaces \cite{al2020nearly} and soft separation axioms \cite{al2021weaker,bayramov2018new}. At this point, it is worth remarking that not all classical results in topology are true in soft topology, see Theorem 4 in \cite{al2019comment}.  Introducing all the above terminologies, arguments, and Remark \ref{z} motivate us to study the structure of maximal soft compact and maximal soft connected topologies. 

This paper is organized as follows: Sections 1\&2 are dedicated to a brief introduction and preliminary concepts from soft set theory and soft topology. Section 3 starts by showing that the set of all soft topologies on a common universe forms a complete lattice. The definition of a maximal soft topology with property $\Gamma$ is given, followed by two subsections. The first one defines the concept of a maximal soft compact topology. Some properties and a characterization of maximal soft compact topologies are established. The second subsection concerns the fundamental properties of maximal soft connected topologies. It also contains some examples that present the structure of maximal soft connected topologies. Finally, the main result on maximal soft connected topologies is demonstrated. Section 4 concludes the summary of our findings and proposes possible lines for future work.

	\section{Preliminaries}\label{se2}\
	Let $Z$ be an initial universe, $\mathcal{P}(Z)$ be all subsets of $Z$ and $E$ be a set of parameters. An ordered pair $(Y,E)=\{(e,Y(e)):e\in E\}$ is said to be a soft set over $Z$, where $Y:E\to\P(Z)$ is a set-valued mapping. The family of all soft sets on $Z$ is represented by $\P(Z,E)$. A soft element \cite{nazmul2013neighbourhood} is a soft set $(Y,E)$ over $Z$ in which $(Y,E) = \{z\}$ for all $e\in E$, where $z\in Z$, and is denoted by $(\{z\},E)$.
	A soft point \cite{allam2017new}, denoted by $z(e)$, is a soft set $(Y,E)$ over $Z$ in which $Y(e) = \{z\}$ and $Y(e')= \emptyset$ for each $e'\neq e$, $e'\in E$, where $e\in E$ and $z\in Z$. A statement $z(e)\widetilde{\in} (Y,E)$ means that $z\in Y(e)$. The singleton soft set $\{z(e)\}$ is referred to $\{z(e)\}=\{(e,\{z\}),(e',\emptyset),\cdots:\forall e'\in E, e'\ne e\}$. The soft set $(Z,E)\backslash (Y,E)$ $($or simply $(Y,E)^c)$ is the complement of $(Y,E)$, where $Y^c:E\to\mathcal{P}(Z)$ is given by $Y^c(e) = Z\backslash Y(e)$ for all $e\in E$. A soft subset $(Y,E)$ over $Z$ is called null, denoted by $\widetilde{\Phi}$, if $Y(e)=\emptyset$ for any $e\in E$ and called absolute, denoted by $\widetilde{Z}$, if $Y(e) = Z$ for any $e\in E$. Notice that ${\widetilde{Z}}^c=\widetilde{\Phi}$ and $\widetilde{\Phi}^c=\widetilde{Z}$. It is said that $(X,E_1)$ is a soft subset of $(Y,E_2)$ (written by $(X,E_1)\widetilde{\subseteq} (Y,E_2)$, \cite{maji2003soft}) if $E_1\subseteq E_2$ and $X(e)\subseteq Y(e)$ for any $e\in E_1$. We say $(X,E_1)=(Y,E_2)$ if $(X,E_1)\widetilde{\subseteq} (Y,E_2)$ and $(Y,E_2)\widetilde{\subseteq} (X,E_1)$.

\begin{defn}\cite{terepeta2019separating,ali2009some}
		Let $\{(Y_i,E):i\in I\}$ be a family of soft sets over $Z$, where $I$ is any index set.
		\begin{enumerate}[(i)]
			\item The intersection of $(Y_i,E)$, for $i\in I$, is a soft set $(Y,E)$ such that $Y(e)=\bigcap_{i\in I}Y_i(e)$ for each $e\in E$ and denoted by $(Y,E)=\widetilde{\bigcap}_{i\in I}(Y_i,E)$.
			\item The union of $(Y_i,E)$, for $i\in I$, is a soft set $(Y,E)$ such that $Y(e)=\bigcup_{i\in I}Y_i(e)$ for each $e\in E$ and denoted by $(Y,E)=\widetilde{\bigcup}_{i\in I}(Y_i,E)$.
		\end{enumerate}
\end{defn}

	\begin{defn}\cite{shabir2011soft}
		A collection $\I$ of $\P(Z,E)$ is said to be a soft topology on $Z$ if the following conditions are satisfied
		\begin{enumerate}[(i)]
			\item $\widetilde{\Phi},\widetilde{Z}\widetilde{\in}\I$;
			\item If $(Y_1,E), (Y_2,E)\widetilde{\in}\I$, then $(Y_1,E)\widetilde{\cap} (Y_2,E)\widetilde{\in}\I$; and
			\item If any $\{(Y_i,E):i\in I\}\widetilde{\subseteq}\I$, then $\widetilde{\bigcup}_{i\in I} (Y_i,E)\widetilde{\in}\I$.
		\end{enumerate}
		Terminologically, we call $(Z, \I, E)$ a soft topological space on $Z$. The elements of $\I$ are called soft $\I$-open sets (or simply soft open sets when no confusion arise), and their complements are called soft $\I$-closed sets (or soft closed sets).
	\end{defn}
	
	In what follows, by $(Z, \I, E)$ we mean a soft topological space, by two distinct soft points $u(e), v(e')$ we mean either $u\ne v$ or $e\ne e'$, and by two disjoint soft sets $(Y,E), (X,E)$ over $Z$, we mean $(Y,E)\widetilde{\cap}(X,E)=\widetilde{\Phi}$.
	\begin{defn}\cite{ccaugman2011soft}
		A subcollection $\mathcal B\subseteq\I$ is called a soft base for the soft topology $\I$ if each element of $\I$ is a union of elements of $\mathcal B$.
	\end{defn}
	
	\begin{defn}\cite{shabir2011soft}
		Let $(Y,E)\ne\widetilde{\Phi}$ be a soft subset of $(Z, \I, E)$. Then $\I_Y:=\{(G,E)\widetilde{\bigcap} (Y,E):(G,E)\widetilde{\in}\I\}$ is called a soft relative topology over $Y$ and $(Y, \I_Y, E)$ is a soft subspace of $(Z, \I, E)$.
	\end{defn}
	
	\begin{defn}\cite{shabir2011soft}
		Let $(Y,E)$ be a soft subset of $(Z, \I, E)$. The soft interior of $(Y,E)$, denoted by $\Int_{\I}((Y,E))$, is the largest soft open set contained in $(Y,E)$. The soft closure of $(Y,E)$, denoted by $\Cl_{\I}((Y,E)))$, is the smallest soft closed set which contains $(Y,E)$. The soft closure and interior of a soft subset $(Y,E)$ in the subspace $(X, \I_X, E)$ are respectively denoted by $\Cl_X((Y,E)))$ and $\Int_X((Y,E))$
	\end{defn}
	
	\begin{lem}\cite{hussain2011some}
		For a soft subset $(Y,E)$ of $(Z, \I, E)$, $$\Int_\I((Y,E)^c)=(\Cl_\I((Y,E)))^c \text{ and } \Cl_\I((Y,E)^c)=(\Int_\I((Y,E)))^c.$$
	\end{lem}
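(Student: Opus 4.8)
The plan is to establish the first identity directly from the extremal characterizations of the soft interior and soft closure operators, and then to deduce the second identity by substituting $(Y,E)^c$ for $(Y,E)$ in the first. Throughout I would use only three elementary facts about the soft complement: that it is an involution, i.e. $((Y,E)^c)^c=(Y,E)$; that it is order-reversing, i.e. $(A,E)\widetilde{\subseteq}(B,E)$ if and only if $(B,E)^c\widetilde{\subseteq}(A,E)^c$; and that, by definition of soft closed set, the complement of a soft $\I$-open set is soft $\I$-closed and vice versa.

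For the first identity I would argue by mutual soft inclusion (hence equality, by antisymmetry of $\widetilde{\subseteq}$). Since $(Y,E)\widetilde{\subseteq}\Cl_\I((Y,E))$, applying the complement gives $(\Cl_\I((Y,E)))^c\widetilde{\subseteq}(Y,E)^c$; and $(\Cl_\I((Y,E)))^c$ is soft $\I$-open because it is the complement of a soft $\I$-closed set. By the maximality of $\Int_\I((Y,E)^c)$ among soft $\I$-open soft subsets of $(Y,E)^c$, this yields $(\Cl_\I((Y,E)))^c\widetilde{\subseteq}\Int_\I((Y,E)^c)$. Conversely, $\Int_\I((Y,E)^c)\widetilde{\subseteq}(Y,E)^c$, so $(Y,E)\widetilde{\subseteq}(\Int_\I((Y,E)^c))^c$, and the right-hand side is soft $\I$-closed (complement of a soft $\I$-open set); by the minimality of $\Cl_\I((Y,E))$ among soft $\I$-closed soft supersets of $(Y,E)$, we get $\Cl_\I((Y,E))\widetilde{\subseteq}(\Int_\I((Y,E)^c))^c$, and taking complements once more gives $\Int_\I((Y,E)^c)\widetilde{\subseteq}(\Cl_\I((Y,E)))^c$. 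The two inclusions establish $\Int_\I((Y,E)^c)=(\Cl_\I((Y,E)))^c$.

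For the second identity I would apply the first identity with $(Y,E)^c$ playing the role of $(Y,E)$; using $((Y,E)^c)^c=(Y,E)$ this reads $\Int_\I((Y,E))=(\Cl_\I((Y,E)^c))^c$, and taking soft complements of both sides yields $\Cl_\I((Y,E)^c)=(\Int_\I((Y,E)))^c$, as required.

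I do not anticipate a genuine obstacle here: the whole argument rests on the order-reversing involutive behaviour of the soft complement together with the defining extremal properties of $\Int_\I$ and $\Cl_\I$. The only point worth stating carefully is that these extremal properties are with respect to the soft inclusion order and that soft-open and soft-closed sets are interchanged by complementation; once those are in place, the identities follow formally, exactly as in classical topology.
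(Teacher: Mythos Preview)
Your argument is correct: the mutual-inclusion proof via the extremal definitions of $\Int_\I$ and $\Cl_\I$, together with the order-reversing involution property of the soft complement, goes through exactly as in classical topology, and deducing the second identity from the first by substituting $(Y,E)^c$ is clean. Note, however, that the paper does not actually supply a proof of this lemma at all --- it is stated with a citation to \cite{hussain2011some} and used as a known fact --- so there is no in-paper argument to compare against; your proposal simply fills in what the authors left to the reference.
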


\begin{defn}\label{softmap}\cite{kharal2011mappings,zorlutuna2012remarks}
	Let $(Z,E), (Y,E')$ be soft sets, and let $p:Z\to Y, q:E\to E'$ be functions. The image of a soft set $(A,E)\widetilde{\subseteq}(Z,E)$ under $f:(Z,E)\to (Y,E')$ is a soft subset $f(A,E)=(f(A),q(E))$ of $(Y,E')$  which is given by 
	$$f(A)(e')=
	\begin{cases}
		\bigcup\limits_{e\in q^{-1}(e')\cap E} p\left(A(e)\right),& q^{-1}(e')\cap E\ne\emptyset\\
		\emptyset,& \text{ otherwise},
	\end{cases}
	$$
	for each $e'\in E'$.
	
	The inverse image of a soft set $(B,E')\widetilde{\subseteq}(Y,E')$ under $f$ is a soft subset $f^{-1}(B,E')=(f^{-1}(B),q^{-1}(E'))$ such that 
	
	$$(f^{-1}(B)(e)=
	\begin{cases}
		p^{-1}\left(B(q(e))\right),& q(e)\in E'\\
		\emptyset,& \text{ otherwise},
	\end{cases}
	$$
	for each $e\in E$.
	
	The soft function $f$ is bijective if both $p$ and $q$ are bijective.
\end{defn}

\begin{lem}\label{zlem}
	Let $(Z,E), (Y,E')$ be soft sets. If $f:(Z,E)\to (Y,E')$ is bijection, then $f((A,E)^c)=(f(A,E))^c$ for each $(A,E)\widetilde{\subseteq}(Z,E)$.
\end{lem}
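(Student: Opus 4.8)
The plan is to prove Lemma~\ref{zlem} by a direct computation using the definitions of image and inverse image from Definition~\ref{softmap}, exploiting that both $p$ and $q$ are bijections. Since $f$ is bijective, $q:E\to E'$ is a bijection, so $q(E)=E'$ and for every $e'\in E'$ the fibre $q^{-1}(e')\cap E$ is a single point, say $\{e\}$ with $q(e)=e'$. First I would unwind the left-hand side: for $(A,E)^c$ we have $A^c(e)=Z\backslash A(e)$, so $f((A,E)^c)$ is the soft set over $Y$ whose value at $e'$ is $\bigcup_{e\in q^{-1}(e')\cap E}p(A^c(e))=p(Z\backslash A(e))$, using that the union is over the singleton $\{e\}$. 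Since $p:Z\to Y$ is a bijection, $p(Z\backslash A(e))=Y\backslash p(A(e))$.

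Next I would unwind the right-hand side: $f(A,E)$ has value at $e'$ equal to $\bigcup_{e\in q^{-1}(e')\cap E}p(A(e))=p(A(e))$ for the same unique $e$, and hence $(f(A,E))^c$ has value at $e'$ equal to $Y\backslash p(A(e))$. Comparing the two expressions coordinatewise over all $e'\in E'$ gives $f((A,E)^c)=(f(A,E))^c$. One small point to record carefully is that both soft sets are genuinely defined over the parameter set $E'=q(E)$ (there is no ``otherwise'' branch to worry about because $q$ is onto), so the equality is an honest equality of soft sets over $(Y,E')$ and not merely a coordinatewise coincidence on a sub-parameter set.

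The only real subtlety — and the step I would be most careful about — is the bookkeeping with the index set in the union defining the image, namely that bijectivity of $q$ collapses $\bigcup_{e\in q^{-1}(e')\cap E}$ to a single term; without injectivity of $q$ the image of a complement can strictly contain the complement of the image, so this is exactly where the hypothesis is used. The rest is the elementary set-theoretic identity $p(Z\backslash S)=Y\backslash p(S)$ valid for bijective $p$, which I would state in one line. I would present the argument in two or three sentences of coordinatewise verification, since no deeper machinery is needed.
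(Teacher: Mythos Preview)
Your argument is correct: the coordinatewise verification you outline is exactly what is needed, and you have correctly identified where the bijectivity of $q$ (collapsing the fibre to a singleton) and of $p$ (giving $p(Z\setminus S)=Y\setminus p(S)$) enter.

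The paper takes a different route: rather than computing anything, it simply cites Theorem~3.14 of \cite{zorlutuna2012remarks} and declares the result to follow. So your approach is strictly more informative---it is self-contained and makes the role of the bijectivity hypothesis explicit, whereas the paper's proof pushes the content entirely into an external reference. What the paper's approach buys is brevity (a single line) and an acknowledgement that the fact is already recorded in the literature; what yours buys is that a reader need not chase down \cite{zorlutuna2012remarks} to be convinced. Either is acceptable here since the lemma is elementary, but your version would stand on its own even if the cited paper were unavailable.
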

\begin{proof}
It follows from Theorem 3.14 in \cite{zorlutuna2012remarks}.
\end{proof}

\begin{defn}\cite{nazmul2013neighbourhood}
	Let $(Z, \I, E)$ and $(Y, \J, E')$ be soft topological spaces. A soft function $f:(Z, \I, E)\to(Y, \J, E')$ is said to be
	\begin{enumerate}[(i)]
		\item soft continuous  if the inverse image of each soft open subset of $(Y, \J, E')$ is a soft open subset of $(Z, \I, E)$.
		\item soft open if the image of each soft open subset of $(Z, \I, E)$ is a soft open subset of $(Y, \J, E')$.
		\item soft homeomorphism if it is a soft open and soft continuous bijection from $(Z, \I, E)$ onto $(Y, \J, E')$.
	\end{enumerate}
\end{defn}

\section{Maximal soft topologies}
\begin{defn}
Let $\I_1, \I_2$ be two soft topologies on $Z$. It is said that $\I_1$ is coarser than $\I_2$ if $\I_1\widetilde{\subset}\I_2$. And $\I_1$ is finer than $\I_2$ if  $\I_2\widetilde{\subset}\I_1$.
\end{defn}
\begin{lem}\label{intersectiontopologies}
Let $\mathcal{F}=\{\I_\alpha:\alpha\in \Lambda\}$ be a family of soft topologies on $Z$, where $\Lambda$ is any index set. Then $\I=\widetilde{\bigcap}_{\alpha\in\Lambda}\I_\alpha$ is a soft topology on $Z$.
\end{lem}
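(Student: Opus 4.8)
The plan is to verify the three axioms of a soft topology directly for the intersection $\I=\widetilde{\bigcap}_{\alpha\in\Lambda}\I_\alpha$, using only the fact that each $\I_\alpha$ is itself a soft topology. This is a routine "intersection of substructures is a substructure" argument, so no serious obstacle is expected; the only thing requiring a little care is bookkeeping the quantifier "for all $\alpha\in\Lambda$" correctly in each step.

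First I would check axiom (i): since $\widetilde{\Phi},\widetilde{Z}\,\widetilde{\in}\,\I_\alpha$ for every $\alpha\in\Lambda$ (each $\I_\alpha$ being a soft topology), by definition of the intersection of the family we get $\widetilde{\Phi},\widetilde{Z}\,\widetilde{\in}\,\widetilde{\bigcap}_{\alpha\in\Lambda}\I_\alpha=\I$. Next, for axiom (ii), suppose $(Y_1,E),(Y_2,E)\,\widetilde{\in}\,\I$. Then for each $\alpha\in\Lambda$ we have $(Y_1,E),(Y_2,E)\,\widetilde{\in}\,\I_\alpha$, and since $\I_\alpha$ is closed under finite soft intersections, $(Y_1,E)\widetilde{\cap}(Y_2,E)\,\widetilde{\in}\,\I_\alpha$. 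As this holds for every $\alpha$, we conclude $(Y_1,E)\widetilde{\cap}(Y_2,E)\,\widetilde{\in}\,\I$.

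Finally, for axiom (iii), let $\{(Y_i,E):i\in I\}\,\widetilde{\subseteq}\,\I$ be an arbitrary subfamily. Fix $\alpha\in\Lambda$; then $\{(Y_i,E):i\in I\}\,\widetilde{\subseteq}\,\I_\alpha$, and since $\I_\alpha$ is closed under arbitrary soft unions, $\widetilde{\bigcup}_{i\in I}(Y_i,E)\,\widetilde{\in}\,\I_\alpha$. Because $\alpha$ was arbitrary, $\widetilde{\bigcup}_{i\in I}(Y_i,E)$ lies in every member of $\mathcal{F}$, hence in $\I$. This establishes all three axioms, so $\I$ is a soft topology on $Z$, completing the proof. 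I expect the write-up to be short; if anything, the "main obstacle" is purely presentational — making sure the reader sees that each implication is applied inside a fixed but arbitrary $\I_\alpha$ before taking the intersection.
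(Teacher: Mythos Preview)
Your proof is correct and follows essentially the same approach as the paper: a direct verification of the three soft-topology axioms for $\I=\widetilde{\bigcap}_{\alpha\in\Lambda}\I_\alpha$, passing each axiom through an arbitrary $\I_\alpha$ and then intersecting. The paper's argument is identical in structure and detail.
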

\begin{proof}
Evidently, $\widetilde{\Phi}, \widetilde{Z}$ belong to $\I$ as they belong to $\I_\alpha$ for all $\alpha$s. Let $(Y_1,E), (Y_2,E)\widetilde{\in}\I$. Then $(Y_1,E), (Y_2,E)\widetilde{\in}\I_\alpha$ for all $\alpha\in\Lambda$. Since all $\I_\alpha$ are soft topologies, so $(Y_1,E)\widetilde{\bigcap}(Y_2,E)\widetilde{\in}\I_\alpha$ for all $\alpha\in\Lambda$. Therefore $(Y_1,E)\widetilde{\bigcap}(Y_2,E)\widetilde{\in}\widetilde{\bigcap}_{\alpha\in\Lambda}\I_\alpha=\I$. Let $\{Y_\lambda:\lambda\in \Delta\}$ be a collection of sets in $\I$. Then for each $\alpha\in\Lambda$, $Y_\lambda\widetilde{\in}\I_\alpha$ for all $\lambda\in \Delta$. But for each $\alpha$, $\I_\alpha$ is a soft topology on $Z$, so $\widetilde{\bigcup}_{\lambda\in \Delta}Y_\lambda\widetilde{\in}\I_\alpha$ for all $\alpha\in\Lambda$. Hence $\widetilde{\bigcup}_{\lambda\in \Delta}Y_\lambda\widetilde{\in}\widetilde{\bigcap}_{\alpha\in\Lambda}\I_\alpha=\I$.
\end{proof}

The above result is an extension of Proposition 6 in \cite{shabir2011soft}.

\begin{lem}\label{intincludingF}
Let $\mathcal{C}$ be a collection of soft subsets over $Z$. There exists a unique soft topology $\I$ on $Z$ including $\mathcal{C}$ and if $\I_0$ is any other soft topology on $Z$ that includes $\mathcal{C}$, then $\I\widetilde{\subset}\I_0$.
\end{lem}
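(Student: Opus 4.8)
The statement asserts that for any collection $\mathcal{C}$ of soft subsets over $Z$, there is a smallest soft topology containing $\mathcal{C}$. The plan is the standard ``intersection of all containing structures'' argument, made available by Lemma \ref{intersectiontopologies}. First I would observe that the family $\mathcal{A}=\{\I_\alpha:\I_\alpha$ is a soft topology on $Z$ with $\mathcal{C}\widetilde{\subseteq}\I_\alpha\}$ is nonempty, since the discrete soft topology $\P(Z,E)$ is a soft topology on $Z$ (all three axioms are immediate for the power set) and it contains $\mathcal{C}$ trivially. Then I would set $\I:=\widetilde{\bigcap}_{\alpha}\I_\alpha$, the intersection taken over all members of $\mathcal{A}$.

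Next, I would verify the three required properties of $\I$. That $\I$ is a soft topology on $Z$ is exactly Lemma \ref{intersectiontopologies}, applied to the family $\mathcal{A}$ (here the index set $\Lambda$ is allowed to be arbitrary, which covers this possibly large family). That $\mathcal{C}\widetilde{\subseteq}\I$ follows because every soft subset in $\mathcal{C}$ lies in each $\I_\alpha$ by the defining property of $\mathcal{A}$, hence in their intersection. Finally, minimality: if $\I_0$ is any soft topology on $Z$ with $\mathcal{C}\widetilde{\subseteq}\I_0$, then $\I_0\in\mathcal{A}$, so $\I=\widetilde{\bigcap}_{\alpha}\I_\alpha\widetilde{\subseteq}\I_0$; thus $\I$ is contained in every competitor, in particular $\I\widetilde{\subset}\I_0$ whenever $\I_0\ne\I$. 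Uniqueness is then automatic: if $\I$ and $\I'$ were two soft topologies each containing $\mathcal{C}$ and each contained in every soft topology containing $\mathcal{C}$, then $\I\widetilde{\subseteq}\I'$ and $\I'\widetilde{\subseteq}\I$, so $\I=\I'$.

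There is no serious obstacle here; the only point requiring a word of care is that the indexing family $\mathcal{A}$ is a proper class or at least ``large,'' so one should phrase the intersection as $\{(Y,E):(Y,E)\widetilde{\in}\I_\alpha$ for every soft topology $\I_\alpha\supseteq\mathcal{C}\}$ rather than invoking an explicit index set; Lemma \ref{intersectiontopologies} still applies verbatim since its proof never uses any cardinality restriction on $\Lambda$. I would write the proof in essentially the three short paragraphs above: nonemptiness of $\mathcal{A}$, closure of $\mathcal{C}$ inside the intersection together with the topology axioms via Lemma \ref{intersectiontopologies}, and the minimality-plus-uniqueness conclusion.
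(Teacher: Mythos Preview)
Your argument is correct and is exactly the paper's approach: observe that $\P(Z,E)$ is a soft topology containing $\mathcal{C}$, take $\I$ to be the intersection of all such topologies, and invoke Lemma~\ref{intersectiontopologies}. Your worry about $\mathcal{A}$ being a proper class is unnecessary, since every soft topology on $Z$ is a subset of $\P(Z,E)$ and hence $\mathcal{A}\subseteq\P(\P(Z,E))$ is a set.
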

\begin{proof}
Notice that such a soft topology always exists because $\P(Z,E)$ is the soft topology on $Z$ which includes $\mathcal{C}$. Consider $\I$, the intersection of all those soft topologies on $Z$ which include $\mathcal{C}$. Then it follows from Lemma \ref{intersectiontopologies} that $\I$ is the required soft topology.
\end{proof}

\begin{defn}\label{generatingtopology}
Let $\mathcal{C}$ be a collection of soft subsets over $Z$. The unique soft topology obtained in the above lemma is called the soft topology on $Z$ generated by the collection $\mathcal{C}$ and is denoted by $\I(\mathcal{C})$, which is the smallest soft topology on $Z$ including $\mathcal{C}$.
\end{defn}

The union of two soft topologies is not a soft topology, see Example 3 in \cite{shabir2011soft}, but we can generate a unique soft topology that includes both of them.
\begin{lem}\label{join}
Let $\I_1, \I_2$ be two soft topologies on $Z$. The generating soft topology $\I(\I_1\widetilde{\cup} \I_2)$ is identical to the soft topology $\I(\mathcal{F})$ generated by $\mathcal{F}=\{(G_1,E)\widetilde{\cap}(G_2,E):(G_1,E)\widetilde{\in}\I_1,(G_2,E) \widetilde{\in}\I_2\}$.
\end{lem}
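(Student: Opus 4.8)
The plan is to prove equality of the two generated soft topologies by a double-inclusion argument, exploiting the minimality property from Lemma \ref{intincludingF} and Definition \ref{generatingtopology}. To show $\I(\mathcal F)\widetilde\subseteq\I(\I_1\widetilde\cup\I_2)$, I would observe that every generator $(G_1,E)\widetilde\cap(G_2,E)\in\mathcal F$ lies in $\I(\I_1\widetilde\cup\I_2)$: indeed $(G_1,E)\widetilde\in\I_1\widetilde\subseteq\I_1\widetilde\cup\I_2\widetilde\subseteq\I(\I_1\widetilde\cup\I_2)$ and similarly $(G_2,E)\widetilde\in\I(\I_1\widetilde\cup\I_2)$, so their finite intersection belongs to $\I(\I_1\widetilde\cup\I_2)$ since the latter is a soft topology. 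Hence $\mathcal F\widetilde\subseteq\I(\I_1\widetilde\cup\I_2)$, and by the minimality of $\I(\mathcal F)$ (it is the smallest soft topology containing $\mathcal F$), we get $\I(\mathcal F)\widetilde\subseteq\I(\I_1\widetilde\cup\I_2)$.

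For the reverse inclusion $\I(\I_1\widetilde\cup\I_2)\widetilde\subseteq\I(\mathcal F)$, it suffices by the same minimality principle to check that $\I_1\widetilde\cup\I_2\widetilde\subseteq\I(\mathcal F)$. Given $(G_1,E)\widetilde\in\I_1$, write $(G_1,E)=(G_1,E)\widetilde\cap(Z,E)=(G_1,E)\widetilde\cap\widetilde Z$; since $\widetilde Z\widetilde\in\I_2$, this exhibits $(G_1,E)$ as a member of $\mathcal F\widetilde\subseteq\I(\mathcal F)$. Symmetrically, every $(G_2,E)\widetilde\in\I_2$ equals $\widetilde Z\widetilde\cap(G_2,E)\in\mathcal F\widetilde\subseteq\I(\mathcal F)$. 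Thus $\I_1\widetilde\cup\I_2\widetilde\subseteq\I(\mathcal F)$, and minimality of $\I(\I_1\widetilde\cup\I_2)$ yields $\I(\I_1\widetilde\cup\I_2)\widetilde\subseteq\I(\mathcal F)$. Combining the two inclusions gives the claimed identity.

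There is no real obstacle here; the argument is entirely formal and relies only on: (a) that a soft topology is closed under finite intersections, which is already available, and (b) the universal/minimality property of the generated soft topology established in Lemma \ref{intincludingF}. The one small point worth stating carefully is that $\widetilde Z$ is the identity for soft intersection (so that each $\I_i$-open set is realized as an element of $\mathcal F$), which follows immediately from the definition of $\widetilde\cap$ together with $\widetilde Z\widetilde\in\I_1$ and $\widetilde Z\widetilde\in\I_2$. (Note: $\mathcal F$ as defined is typically not itself a soft topology — it is only $\widetilde\cap$-stable, not union-stable — which is precisely why one passes to $\I(\mathcal F)$; one should not claim $\mathcal F$ is already a topology.) I would therefore present the proof in two short numbered inclusions, each a single sentence invoking Lemma \ref{intincludingF}.
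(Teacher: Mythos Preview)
Your proof is correct and follows essentially the same idea as the paper's: both use that $\widetilde Z\widetilde\in\I_i$ to realize $\I_1\widetilde\cup\I_2\widetilde\subseteq\mathcal F$, and then appeal to the minimality/uniqueness of the generated soft topology from Lemma \ref{intincludingF}. The only difference is that you spell out both inclusions explicitly, whereas the paper argues only $\I_1\widetilde\cup\I_2\widetilde\subseteq\mathcal F$ and then concludes equality ``by uniqueness''; your version is a bit more complete in that it makes the easy reverse inclusion $\mathcal F\widetilde\subseteq\I(\I_1\widetilde\cup\I_2)$ explicit as well.
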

\begin{proof}
Since $\I_1, \I_2$ soft topologies, so they include $\widetilde{Z}$. By taking $\widetilde{Z}=(G_i,E)$, for $i=1,2$, then $\mathcal{F}$ will exactly contain $\I_1, \I_2$. By the uniqueness of the generating soft topology, $\I(\mathcal{F})=\I(\I_1\widetilde{\cup} \I_2)$, see Definition \ref{generatingtopology}.
\end{proof}

\begin{thm}
The set $\Sigma$ of all soft topologies over a common universe $Z$ forms a complete lattice under soft set inclusion \quotes{$\widetilde{\subset}$}.
\end{thm}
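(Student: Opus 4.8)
The plan is to verify directly the two defining features of a complete lattice: that $(\Sigma,\widetilde{\subset})$ is a partially ordered set, and that every subfamily of $\Sigma$ possesses both an infimum and a supremum \emph{inside} $\Sigma$. The partial order axioms (reflexivity, antisymmetry, transitivity) are inherited verbatim from ordinary set inclusion, because each soft topology is a subcollection of $\P(Z,E)$ and equality of soft topologies is, by definition, mutual inclusion; this step needs only a one-line remark.

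For infima, I would take an arbitrary family $\mathcal{F}=\{\I_\alpha:\alpha\in\Lambda\}\widetilde{\subseteq}\Sigma$ and set $\I=\widetilde{\bigcap}_{\alpha\in\Lambda}\I_\alpha$. By Lemma \ref{intersectiontopologies}, $\I$ is again a soft topology on $Z$, hence $\I\in\Sigma$; and by construction $\I\widetilde{\subset}\I_\alpha$ for every $\alpha$, while any soft topology contained in all the $\I_\alpha$ is contained in $\I$. Thus $\I=\bigwedge\mathcal{F}$. (When $\Lambda=\emptyset$ this intersection is all of $\P(Z,E)$, which is the top element of $\Sigma$.)

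For suprema, with the same family $\mathcal{F}$, I would let $\mathcal{C}=\widetilde{\bigcup}_{\alpha\in\Lambda}\I_\alpha$, regarded merely as a collection of soft subsets over $Z$, and invoke Lemma \ref{intincludingF} and Definition \ref{generatingtopology} to form the generated soft topology $\I(\mathcal{C})$, the smallest soft topology including $\mathcal{C}$. Since $\mathcal{C}$ contains each $\I_\alpha$, so does $\I(\mathcal{C})$, so it is an upper bound in $\Sigma$; and if $\J\in\Sigma$ satisfies $\I_\alpha\widetilde{\subset}\J$ for all $\alpha$, then $\mathcal{C}\widetilde{\subseteq}\J$, whence minimality of $\I(\mathcal{C})$ forces $\I(\mathcal{C})\widetilde{\subset}\J$. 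Hence $\I(\mathcal{C})=\bigvee\mathcal{F}$, and Lemma \ref{join} records the explicit description of this join in the two-topology case. (For $\Lambda=\emptyset$ we get $\mathcal{C}=\emptyset$ and $\I(\mathcal{C})=\{\widetilde{\Phi},\widetilde{Z}\}$, the bottom element.)

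Since every subfamily of $\Sigma$ has both a meet and a join in $\Sigma$, the poset $(\Sigma,\widetilde{\subset})$ is a complete lattice. I do not expect a genuine obstacle: the substance is already packaged in Lemmas \ref{intersectiontopologies} and \ref{intincludingF}, and the only point demanding a little care is the bookkeeping for the empty family --- equivalently, observing outright that $\{\widetilde{\Phi},\widetilde{Z}\}$ and $\P(Z,E)$ serve as least and greatest elements --- which confirms that the lattice is bounded and that the word \quotes{complete} is fully warranted.
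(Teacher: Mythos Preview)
Your proposal is correct and follows essentially the same approach as the paper: verify the partial order, obtain infima via Lemma \ref{intersectiontopologies}, and obtain suprema via the generated soft topology of the union. You are in fact slightly more careful than the paper, invoking Lemma \ref{intincludingF}/Definition \ref{generatingtopology} for arbitrary families (the paper cites Lemma \ref{join}, which is literally stated only for two topologies) and explicitly treating the empty subfamily, but these are refinements of the same argument rather than a different route.
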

\begin{proof}
One can easily show that $\widetilde{\subset}$ is a partially ordered set on $\Sigma$. It remains to prove that every subset of $\Sigma$ has the greatest lower bound and the least upper bound. Let $\Sigma_0$ be a subset of $\Sigma$. By Lemma \ref{intersectiontopologies}, $\bigwedge\Sigma_0=\widetilde{\bigcap}\{\I_0:\I_0\widetilde{\in}\Sigma_0\}$ is the greatest lower bound of $\Sigma_0$. By Lemma \ref{join}, $\bigvee\Sigma_0=\I(\widetilde{\bigcup}\{\I_0:\I_0\widetilde{\in}\Sigma_0\})$ is the least upper bound of $\Sigma_0$.
\end{proof}

\begin{rem}\label{z}
Note that the indiscrete soft topology $\I_I$ on $Z$ is the minimal (smallest) element in $\Sigma$ and the discrete soft topology $\I_D$ on $Z$ is the maximal (largest) element in $\Sigma$. It is worth remarking that $\I_D$ is the maximal soft Hausdorff topology and $\I_I$ is the minimal soft compact (and minimal soft connected) topology. From the latter statement, we understand that maximal covering and connectedness properties are more interesting to study. On the other hand, minimal separation axioms are more interesting. Hence, we focus on considering maximal soft compact and maximal soft connected spaces.
\end{rem}

\begin{defn}\label{defnMax}
Let $(Z,\I,E)$ be a soft topological space with the property $\Gamma$. Then $\I$ is called $\Gamma$-maximal if any soft topology finer than $\I$ does not have the property $\Gamma$.
\end{defn}

\subsection{Maximal soft compact topologies}
Recall that a space $(Z, \I, E)$ is called soft compact \cite{aygunouglu2012some} if each soft open cover of $\widetilde{Z}$ has a finite subcover. If we replace $\Gamma$ by soft compactness, the Definition \ref{defnMax} will be
\begin{defn}\label{Maxcompact}
Let $(Z,\I,E)$ be a soft compact space. Then $\I$ is called maximal soft compact if any soft topology finer than $\I$ is not soft compact.
\end{defn}

The following example shows the structure of maximal soft compact spaces.

\begin{exam}\label{ex2}
Consider the set of naturals $\mathbb{N}$ and $E=\{e\}$. Define a soft topology $\I=\{Y_E\widetilde{\subseteq}\widetilde{\mathbb{N}}:Y_E=\{n(e)\} \text{ for }n\ne 1\text{ or } 1(e)\widetilde{\in}Y_E \text{ if } Y^c_E\text{ is finite}\}\widetilde{\bigcup}\{\widetilde{\Phi}\}$. Then $(\mathbb{N}, \I, E)$ is maximal soft compact.
\end{exam}

\begin{lem}\label{closedincompactiscompact}\cite[Proposition 5.1]{el2018partial}
 Let $(Z, \I, E)$ be a soft compact space and let $(Y, E)\widetilde{\subseteq}\widetilde{Z}$. If $(Y, E)$ is soft closed, then $(Y, E)$ is soft compact.
 \end{lem}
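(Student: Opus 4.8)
The plan is to transcribe the classical argument that a closed subset of a compact space is compact into the soft setting. First I would unravel what the conclusion asserts: to say $(Y,E)$ is soft compact means the soft subspace $(Y,\I_Y,E)$ is soft compact, i.e.\ whenever $\{(V_i,E):i\in I\}\widetilde{\subseteq}\I_Y$ has $\widetilde{\bigcup}_{i\in I}(V_i,E)=(Y,E)$, some finite subfamily already has union $(Y,E)$. (The case $(Y,E)=\widetilde{\Phi}$ is vacuous, so assume $(Y,E)\neq\widetilde{\Phi}$; one could equivalently work with covers of $(Y,E)$ by members of $\I$, the two formulations matching through the definition of $\I_Y$.) So I fix such a cover and, using $\I_Y=\{(G,E)\widetilde{\cap}(Y,E):(G,E)\widetilde{\in}\I\}$, choose for each $i\in I$ a soft open set $(G_i,E)\widetilde{\in}\I$ with $(V_i,E)=(G_i,E)\widetilde{\cap}(Y,E)$.

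Next comes the one step that uses the hypothesis of closedness: since $(Y,E)$ is soft closed, $(Y,E)^c\widetilde{\in}\I$, and I claim $\{(G_i,E):i\in I\}\widetilde{\cup}\{(Y,E)^c\}$ is a soft open cover of $\widetilde{Z}$. Indeed, over each parameter $e\in E$ the sets $V_i(e)=G_i(e)\cap Y(e)$ cover $Y(e)$, hence the $G_i(e)$ cover $Y(e)$, while $(Y,E)^c$ supplies everything in $Z\setminus Y(e)$; so the union of this enlarged family is $\widetilde{Z}$. Applying soft compactness of $(Z,\I,E)$ produces a finite subfamily, say $(G_{i_1},E),\dots,(G_{i_n},E)$ together with (possibly) $(Y,E)^c$, whose union is $\widetilde{Z}$.

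Finally I would intersect this finite subcover with $(Y,E)$. Since $(Y,E)^c\widetilde{\cap}(Y,E)=\widetilde{\Phi}$, the term $(Y,E)^c$ contributes nothing, so intersecting gives $\widetilde{\bigcup}_{k=1}^{n}\big((G_{i_k},E)\widetilde{\cap}(Y,E)\big)=(Y,E)$, that is, $\widetilde{\bigcup}_{k=1}^{n}(V_{i_k},E)=(Y,E)$. Thus the original cover has a finite subcover, and $(Y,\I_Y,E)$ is soft compact.

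As for the main obstacle: conceptually there is essentially none — this is a routine translation of the point-set proof. The only points needing care are the bookkeeping between a cover in the relative topology $\I_Y$ and the induced cover of $\widetilde{Z}$ by members of $\I$ (managed entirely by the definition of $\I_Y$), keeping the parameter set equal to $E$ throughout so that the soft unions and intersections act parameter-wise, and invoking soft closedness of $(Y,E)$ exactly once so that $(Y,E)^c$ may legitimately be adjoined to the ambient cover.
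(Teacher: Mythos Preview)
Your argument is correct and is precisely the standard point-set proof transported to the soft setting; all the bookkeeping with $\I_Y$, parameter-wise unions, and the single use of soft closedness to adjoin $(Y,E)^c$ is handled properly. Note, however, that the paper does not supply its own proof of this lemma at all: it merely quotes the result as Proposition~5.1 of \cite{el2018partial}, so there is nothing in the paper to compare your argument against beyond observing that you have filled in what the authors left as a citation.
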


\begin{defn}\cite{ameen2021alghour}
Let $(Z,\I,E)$ be any soft topological space and let $(Y, E)$ be any soft non-open subset over $Z$. The soft topology $\I^*$ on $Z$ generated by $\I\widetilde{\cup}\{(Y, E)\}$ is said to be an s-extension of $\I$ and it is denoted by $\I^*=\I[(Y,E)]$ (or shortly, $\I^*=\I[Y]$).
\end{defn}

\begin{lem}\label{compactthm}\cite[Theorem 3.2]{ameen2021alghour}
Let $(Z, \I, E)$ be a soft compact space. Then $(Z, \I[Y], E)$ is soft compact if and only if $(Y^c,E)$ is soft compact in $(Z, \I, E)$.
\end{lem}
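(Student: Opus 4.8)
The plan is to prove both implications by working with a convenient soft base for the s-extension $\I^{*}=\I[Y]$. By Lemma \ref{join}, $\I^{*}$ is the soft topology generated by $\mathcal{F}=\{(G,E)\widetilde{\cap}(H,E):(G,E)\widetilde{\in}\I,\ (H,E)\widetilde{\in}\I\widetilde{\cup}\{(Y,E)\}\}$, so every soft $\I^{*}$-open set is a union of soft sets each of which is either soft $\I$-open (``type (a)'') or of the form $(G,E)\widetilde{\cap}(Y,E)$ with $(G,E)\widetilde{\in}\I$ (``type (b)''); note every type (b) soft set satisfies $(G,E)\widetilde{\cap}(Y,E)\widetilde{\subseteq}(Y,E)$. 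As usual, since any soft $\I^{*}$-open cover of $\widetilde{Z}$ can be refined to a cover by such basic soft sets, and a finite subcover by basic soft sets pulls back to a finite subcover of the original, it suffices to treat covers whose members are of type (a) or type (b).

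For $(\Leftarrow)$, assume $(Y^{c},E)$ is soft compact in $(Z,\I,E)$ and let $\mathcal{U}$ be a cover of $\widetilde{Z}$ by basic soft $\I^{*}$-open sets. Since every type (b) member is a soft subset of $(Y,E)$, the type (a) members of $\mathcal{U}$ already cover $(Y^{c},E)$; by soft compactness of $(Y^{c},E)$ finitely many of them, say $(U_{1},E),\dots,(U_{n},E)$, cover $(Y^{c},E)$. Put $(U,E)=(U_{1},E)\widetilde{\cup}\cdots\widetilde{\cup}(U_{n},E)$ and $(K,E):=(U^{c},E)$; then $(K,E)$ is soft $\I$-closed, hence soft compact in $(Z,\I,E)$ by Lemma \ref{closedincompactiscompact}, and $(K,E)\widetilde{\subseteq}(Y,E)$. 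To each $(V,E)\in\mathcal{U}$ I associate a soft $\I$-open set $\widehat{V}$, namely $\widehat{V}=(V,E)$ if $(V,E)$ is of type (a) and $\widehat{V}=(G,E)$ if $(V,E)=(G,E)\widetilde{\cap}(Y,E)$ is of type (b). Using $(K,E)\widetilde{\subseteq}(Y,E)$ one checks that $\widehat{V}\widetilde{\cap}(K,E)=(V,E)\widetilde{\cap}(K,E)$ for every $(V,E)\in\mathcal{U}$, so $\{\widehat{V}:(V,E)\in\mathcal{U}\}$ is a soft $\I$-open cover of $(K,E)$. Soft compactness of $(K,E)$ then yields $(V_{1},E),\dots,(V_{m},E)\in\mathcal{U}$ with $(K,E)\widetilde{\subseteq}\widetilde{\bigcup}_{j}\widehat{V}_{j}$, whence $(K,E)=\widetilde{\bigcup}_{j}\bigl((K,E)\widetilde{\cap}(V_{j},E)\bigr)\widetilde{\subseteq}\widetilde{\bigcup}_{j}(V_{j},E)$, and $\{(U_{1},E),\dots,(U_{n},E),(V_{1},E),\dots,(V_{m},E)\}$ is a finite subcover of $\mathcal{U}$. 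Hence $(Z,\I^{*},E)$ is soft compact.

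For $(\Rightarrow)$, assume $(Z,\I^{*},E)$ is soft compact. Since $(Y,E)\widetilde{\in}\I^{*}$, the soft set $(Y^{c},E)$ is soft $\I^{*}$-closed, hence soft compact as a soft subspace of $(Z,\I^{*},E)$ by Lemma \ref{closedincompactiscompact}. It then remains to note that the soft relative topology induced on $(Y^{c},E)$ by $\I^{*}$ coincides with the one induced by $\I$: intersecting a type (a) basic soft open set with $(Y^{c},E)$ gives a soft $\I$-open trace, while intersecting a type (b) basic soft open set with $(Y^{c},E)$ gives $\widetilde{\Phi}$ because $(Y,E)\widetilde{\cap}(Y^{c},E)=\widetilde{\Phi}$; this shows $\I^{*}_{Y^{c}}\subseteq\I_{Y^{c}}$, and the reverse inclusion holds since $\I\widetilde{\subseteq}\I^{*}$. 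Therefore $(Y^{c},E)$ is soft compact in $(Z,\I,E)$.

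The step requiring the most care is the bookkeeping in $(\Leftarrow)$: verifying that the trace on $(K,E)$ of a type (b) basic soft open set $(G,E)\widetilde{\cap}(Y,E)$ equals the trace of the auxiliary soft $\I$-open set $(G,E)$ — which is exactly where $(K,E)\widetilde{\subseteq}(Y,E)$ enters — and then translating a finite soft $\I$-open subcover of $(K,E)$ back into a finite subfamily of $\mathcal{U}$. Everything else reduces to a routine compactness argument combined with the fact that soft closed subsets of soft compact spaces are soft compact (Lemma \ref{closedincompactiscompact}).
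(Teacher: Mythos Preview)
Your argument is correct. Note, however, that the paper does not actually supply a proof of this lemma: it is quoted verbatim from \cite[Theorem~3.2]{ameen2021alghour} and stated without proof, so there is no in-paper argument to compare against. What you have written is the standard proof one would expect: describe a soft base for $\I^{*}=\I[Y]$ via Lemma~\ref{join}, reduce to basic covers, and in the forward direction exploit that $(Y^{c},E)$ is soft $\I^{*}$-closed together with $\I^{*}_{Y^{c}}=\I_{Y^{c}}$, while in the backward direction split a basic cover into a part covering $(Y^{c},E)$ and a soft $\I$-closed remainder $(K,E)\widetilde{\subseteq}(Y,E)$ handled by Lemma~\ref{closedincompactiscompact}. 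One minor simplification in $(\Rightarrow)$: once you know $(Y^{c},E)$ is soft $\I^{*}$-compact, you need not identify the two relative topologies; it suffices to observe that any soft $\I$-open cover of $(Y^{c},E)$ is already a soft $\I^{*}$-open cover since $\I\widetilde{\subseteq}\I^{*}$.
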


\begin{prop}\label{maxcomp=CK}
A soft topological space $(Z,\I,E)$ is maximal soft compact if and only if the family of all soft $\I$-closed sets is equal to the family of all soft $\I$-compact sets.
\end{prop}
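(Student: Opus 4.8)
The plan is to derive both directions from the two structural results already available: Lemma~\ref{closedincompactiscompact}, that a soft closed subset of a soft compact space is soft compact, and Lemma~\ref{compactthm}, that for a soft compact space $(Z,\I,E)$ the s-extension $(Z,\I[Y],E)$ is soft compact exactly when $(Y^c,E)$ is soft compact in $(Z,\I,E)$. I will also use repeatedly the elementary observation that any soft topology coarser than a soft compact one is itself soft compact, since a soft open cover with respect to the coarser topology is also a soft open cover with respect to the finer one and hence has a finite subcover.

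For the necessity, suppose $(Z,\I,E)$ is maximal soft compact. It is in particular soft compact, so by Lemma~\ref{closedincompactiscompact} every soft $\I$-closed set is soft $\I$-compact. For the reverse inclusion I argue by contradiction: let $(Y,E)$ be soft $\I$-compact but not soft $\I$-closed. Then $(Y^c,E)$ is not soft $\I$-open, so the s-extension $\I^{*}=\I[(Y^c,E)]$ is well-defined and strictly finer than $\I$. Applying Lemma~\ref{compactthm} with $(Y^c,E)$ playing the role of $(Y,E)$, the space $(Z,\I^{*},E)$ is soft compact if and only if $((Y^c)^c,E)=(Y,E)$ is soft compact in $(Z,\I,E)$, which holds by assumption. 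Thus $\I^{*}$ is a strictly finer soft compact topology, contradicting maximality; hence the soft $\I$-compact sets are precisely the soft $\I$-closed sets.

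For the sufficiency, suppose the family of soft $\I$-closed sets coincides with the family of soft $\I$-compact sets. Since $\widetilde{Z}$ is always soft $\I$-closed, it is then soft $\I$-compact, so $(Z,\I,E)$ is soft compact. Assume, for contradiction, that $\I$ is not maximal soft compact; then there is a soft topology $\J$ with $\I\widetilde{\subseteq}\J$, $\I\ne\J$, and $\J$ soft compact. Pick $(Y,E)\widetilde{\in}\J$ that is not soft $\I$-open. Then $\I[(Y,E)]$ is well-defined and $\I\widetilde{\subseteq}\I[(Y,E)]\widetilde{\subseteq}\J$, so $\I[(Y,E)]$ is soft compact as a coarsening of $\J$. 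By Lemma~\ref{compactthm}, $(Y^c,E)$ is soft compact in $(Z,\I,E)$, hence soft $\I$-closed by hypothesis, hence $(Y,E)$ is soft $\I$-open --- a contradiction. Therefore no soft topology strictly finer than $\I$ is soft compact, i.e., $\I$ is maximal soft compact.

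I do not expect a genuine obstacle here; the only points needing care are checking that each s-extension invoked is legitimate (the adjoined soft set must be non-open, which is guaranteed because it lies in a strictly finer topology), keeping track of complements when feeding sets into Lemma~\ref{compactthm}, and the remark that coarsening preserves soft compactness.
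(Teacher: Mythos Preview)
Your proof is correct. The forward direction matches the paper's argument essentially verbatim (you are just a bit more explicit in invoking Lemma~\ref{closedincompactiscompact} for the closed\,$\Rightarrow$\,compact inclusion).

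For the converse the two arguments diverge slightly. The paper picks a soft $\I^{*}$-closed set $(Y,E)$ that is not soft $\I$-closed, uses the hypothesis to say $(Y,E)$ is not soft $\I$-compact, produces a witnessing soft $\I$-open cover with no finite subcover, and observes that the same cover is a soft $\I^{*}$-open cover, contradicting Lemma~\ref{closedincompactiscompact}. You instead pick a soft $\J$-open set $(Y,E)$ that is not soft $\I$-open, sandwich $\I\widetilde{\subseteq}\I[(Y,E)]\widetilde{\subseteq}\J$, use the coarsening-preserves-compactness remark to get $\I[(Y,E)]$ soft compact, and then feed this back into Lemma~\ref{compactthm} to force $(Y^c,E)$ soft $\I$-compact, hence soft $\I$-closed, hence $(Y,E)$ soft $\I$-open --- a contradiction. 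Your route has the pleasant symmetry of using Lemma~\ref{compactthm} in both directions and never unpacking covers; the paper's route is marginally more elementary in that it needs neither the sandwiching step nor the coarsening observation. Both are clean, and indeed the paper itself uses exactly your sandwiching trick later when showing that stable soft compact $T_2$ implies maximal soft compact.
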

\begin{proof}
Assume that $\I$ is maximal soft compact. If $(Y,E)$ is a soft compact set but not soft closed, then  $(Y^c,E)$ is not soft open. By Lemma \ref{compactthm}, $\I[Y^c]$ is soft compact. But $\I\widetilde{\subset}\I[Y^c]$, so this violates the maximality of $\I$. Hence $(Y,E)$ must be closed.

Conversely, suppose the family of all soft $\I$-closed sets is equal to the family of all soft $\I$-compact sets. If $\I$ is not maximal soft compact, there exists a soft compact topology $\I^*$ such that $\I\widetilde{\subset}\I^*$. Therefore there is a set $(Y,E)\widetilde{\subset}\widetilde{Z}$ which is soft $\I^*$-closed but not soft $\I$-closed. By assumption, $(Y,E)$ is not $\I$-compact. Therefore, there exists a soft $\I$-open cover $\mathscr{U}$ of $(Y,E)$ which has no finite subcover. Since $\I\widetilde{\subset}\I^*$, so $\mathscr{U}$ is also a soft $\I^*$-open cover of $(Y,E)$. This means that $(Y,E)$ is not soft $\I^*$-compact. But this is a contradiction, because $(Y,E)$ is soft $\I^*$-closed and, by Lemma \ref{closedincompactiscompact}, it must be soft $\I^*$-compact. This proves that $\I$ has to be maximal soft compact. 
\end{proof}

\begin{prop}\label{maxcomp=con=homeo}
A soft topological space $(Z,\I,E)$ is maximal soft compact if and only if each soft continuous bijection from a soft compact space onto $(Z,\I,E)$ is a soft homeomorphism.
\end{prop}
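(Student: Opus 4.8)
The plan is to prove the equivalence in two directions, using the characterization from Proposition \ref{maxcomp=CK} (that maximal soft compactness is equivalent to ``soft closed $=$ soft compact'') as the engine for the forward direction, and using s-extensions (together with Lemma \ref{compactthm}) as the engine for the reverse direction.

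For the forward direction, assume $\I$ is maximal soft compact and let $f:(W,\mathfrak{J},E'')\to(Z,\I,E)$ be a soft continuous bijection from a soft compact space. Since $f$ is a continuous bijection onto $(Z,\I,E)$, it is automatically ``soft compact-preserving'' in the reverse sense: one first checks that $(W,\mathfrak{J},E'')$ being soft compact forces $(Z,\I,E)$ to be soft compact, and more importantly that for any soft $\I$-closed set $(C,E)\widetilde{\subseteq}\widetilde{Z}$, the preimage $f^{-1}(C,E)$ is soft $\mathfrak{J}$-closed, hence soft compact by Lemma \ref{closedincompactiscompact}, hence its image $f(f^{-1}(C,E))=(C,E)$ is soft compact (the continuous image of a soft compact set is soft compact). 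Conversely, if $(C,E)$ is soft $\I$-compact, then by Proposition \ref{maxcomp=CK} it is soft $\I$-closed, so the previous line shows $f^{-1}(C,E)$ is soft compact. To conclude $f$ is a soft homeomorphism it suffices to show $f$ is soft closed (equivalently soft open, since $f$ is bijective, via Lemma \ref{zlem}); so let $(D,E'')$ be soft $\mathfrak{J}$-closed in $W$. It is soft compact, hence $f(D,E'')$ is soft compact in $(Z,\I,E)$, hence soft $\I$-closed by Proposition \ref{maxcomp=CK}. Thus $f$ carries closed sets to closed sets, so $f^{-1}$ is soft continuous, and $f$ is a soft homeomorphism.

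For the reverse direction, suppose every soft continuous bijection from a soft compact space onto $(Z,\I,E)$ is a soft homeomorphism, and suppose toward a contradiction that $\I$ is \emph{not} maximal soft compact. By Proposition \ref{maxcomp=CK}, there is a soft $\I$-compact set $(Y,E)$ that is not soft $\I$-closed; then $(Y^c,E)$ is soft non-open, and by Lemma \ref{compactthm} the s-extension $\I^*=\I[Y^c]$ is soft compact, with $\I\widetilde{\subset}\I^*$ strictly. Now take the identity soft function $\mathrm{id}:(Z,\I^*,E)\to(Z,\I,E)$ (induced by the identities $Z\to Z$, $E\to E$). It is a soft continuous bijection from the soft compact space $(Z,\I^*,E)$ onto $(Z,\I,E)$, so by hypothesis it must be a soft homeomorphism, forcing $\I^*=\I$, contradicting $\I\widetilde{\subsetneq}\I^*$. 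Hence $\I$ is maximal soft compact.

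I expect the main obstacle to be the reverse direction's reliance on having a \emph{strictly} finer soft compact topology actually witnessed by an s-extension: Lemma \ref{compactthm} gives soft compactness of $\I[Y^c]$ from soft compactness of $(Y^c,E)=(Y,E)^c$, but I must make sure that failure of maximal compactness really produces a soft \emph{compact but not closed} set (this is exactly the content of Proposition \ref{maxcomp=CK}, so it is available), and that $(Y^c,E)$ is genuinely soft non-open so that the s-extension is a proper enlargement. A secondary routine point to get right in the forward direction is the standard facts that a soft continuous image of a soft compact set is soft compact and that a soft continuous bijection onto a soft compact space pulls soft closed (hence soft compact) sets back to soft compact sets; these are the soft analogues of classical facts and should follow directly from the definitions of soft continuity and soft compactness together with Lemma \ref{closedincompactiscompact}.
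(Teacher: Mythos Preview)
Your argument is correct, but both directions take a different route from the paper's own proof of this proposition.

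For the forward direction, the paper transports the domain topology forward: setting $\T=\{f((U,E')):(U,E')\widetilde{\in}\I'\}$ gives a soft topology on $Z$ for which $f$ is tautologically a soft homeomorphism, hence $(Z,\T,E)$ is soft compact; since $f$ is soft continuous one has $\I\widetilde{\subseteq}\T$, and maximality of $\I$ forces $\T=\I$. Your approach instead routes through Proposition~\ref{maxcomp=CK}: a soft closed set in the domain is soft compact by Lemma~\ref{closedincompactiscompact}, its image is soft compact, hence soft closed by the closed\,$=$\,compact characterization, so $f$ is soft closed. This is precisely the implication $(2)\Rightarrow(3)$ of Theorem~\ref{maxcompactthm}; the paper's proof of the proposition avoids Proposition~\ref{maxcomp=CK} altogether, while your route relies on it. (The first half of your forward paragraph, about preimages of soft $\I$-closed sets being soft compact, is correct but not used in your conclusion and can be deleted.)

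For the reverse direction, the paper simply invokes Definition~\ref{Maxcompact}: if $\I$ is not maximal soft compact, a strictly finer soft compact topology $\I^*$ exists \emph{by definition}, and the identity $(Z,\I^*,E)\to(Z,\I,E)$ is the desired soft continuous bijection that fails to be a soft homeomorphism. Your construction of $\I^*$ as an explicit s-extension $\I[Y^c]$ via Proposition~\ref{maxcomp=CK} and Lemma~\ref{compactthm} is correct but unnecessary---the definition already hands you $\I^*$.
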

\begin{proof}
Assume $(Z,\I,E)$ is maximal soft compact. Let $f:(Y,\I',E')\to(Z,\I,E)$ be a soft continuous bijection, where $(Y,\I',E')$ is soft compact. Take $\T=\{f((U,E')):(U,E')\widetilde{\in}\I'\}$. Evidently $(Z,\T,E)$ is a soft topology and $f$ is a soft homeomorphism. Since $(Y,\I',E')$ is soft compact, $(Z,\T,E)$ is also soft compact under $f$. But $\I\widetilde{\subseteq}\T$ and $\I$ is maximal, thus $\T=\I$.

Conversely, if $(Z,\I,E)$ is not maximal soft compact, then there exists a soft compact topology $\I^*$ on $Z$ such that $\I\widetilde{\subseteq}\I^*$. Then the identity soft function $h:(Z,\I^*,E)\to(Z,\I,E)$ is a soft continuous bijection but not a soft homeomorphism. This completes the proof.
\end{proof}

\begin{thm}\label{maxcompactthm}
For a soft topological space $(Z,\I,E)$, the following are equivalent:
\begin{enumerate}[(1)]
	\item $\I$ is maximal soft compact;
	\item the family of all soft $\I$-closed sets is equal to the family of all soft $\I$-compact sets; and
	\item each soft continuous bijection from a soft compact space onto $(Z,\I,E)$ is a soft homeomorphism.
\end{enumerate} 
\end{thm}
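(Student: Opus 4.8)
The plan is to read Theorem \ref{maxcompactthm} off directly from the two propositions that precede it. Proposition \ref{maxcomp=CK} already records the biconditional $(1)\Leftrightarrow(2)$, and Proposition \ref{maxcomp=con=homeo} already records the biconditional $(1)\Leftrightarrow(3)$. Composing these two equivalences through the common vertex $(1)$ immediately gives that $(1)$, $(2)$ and $(3)$ are pairwise equivalent, which is exactly the assertion of the theorem. So the honest proof is one line: the statement follows from Propositions \ref{maxcomp=CK} and \ref{maxcomp=con=homeo}.

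If instead one wanted a self-contained cyclic presentation, I would establish $(1)\Rightarrow(2)\Rightarrow(3)\Rightarrow(1)$. For $(1)\Rightarrow(2)$: in any soft compact space every soft closed set is soft compact by Lemma \ref{closedincompactiscompact}, so only the reverse inclusion needs the hypothesis; if $(Y,E)$ were soft compact but not soft closed, then $(Y^c,E)$ is not soft open, hence by Lemma \ref{compactthm} the s-extension $\I[Y^c]$ is still soft compact and strictly finer than $\I$, contradicting maximality. For $(2)\Rightarrow(3)$: given a soft continuous bijection $f:(Y,\I',E')\to(Z,\I,E)$ with $(Y,\I',E')$ soft compact, it suffices to show $f$ is soft closed, since by Lemma \ref{zlem} a bijective soft map is soft closed iff it is soft open; a soft $\I'$-closed subset of the soft compact space $(Y,\I',E')$ is soft compact by Lemma \ref{closedincompactiscompact}, its image under the soft continuous $f$ is soft compact in $(Z,\I,E)$, and hence soft $\I$-closed by $(2)$, so $f$ carries soft closed sets to soft closed sets and is therefore a soft homeomorphism. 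For $(3)\Rightarrow(1)$: if $\I$ were not maximal soft compact, choose a strictly finer soft compact topology $\I^*$; then the identity map $(Z,\I^*,E)\to(Z,\I,E)$ is a soft continuous bijection from a soft compact space that is not a soft homeomorphism, contradicting $(3)$.

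I do not expect any real obstacle here: all the genuine content — the s-extension criterion for preserving soft compactness (Lemma \ref{compactthm}), the ``soft closed in soft compact is soft compact'' fact (Lemma \ref{closedincompactiscompact}), and the behaviour of images and complements under bijective soft functions (Lemma \ref{zlem}) — has already been isolated in the preceding lemmas and in the two propositions. The theorem is thus a bookkeeping corollary that merely assembles those equivalences into a single statement.
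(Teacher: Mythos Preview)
Your proposal is correct and matches the paper's approach: the paper also cites Proposition~\ref{maxcomp=CK} for $(1)\Rightarrow(2)$ and Proposition~\ref{maxcomp=con=homeo} for $(3)\Rightarrow(1)$, and supplies essentially the same direct argument you give for $(2)\Rightarrow(3)$ (phrased as showing $f$ is soft open via $f((G^c,E'))=(f((G,E')))^c$, which is your soft-closed argument unwound). Your preliminary observation that both biconditionals are already on record and the theorem is pure bookkeeping is entirely accurate.
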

\begin{proof}
(1)$\implies$(2) Proposition \ref{maxcomp=CK}

(2)$\implies$(3) Suppose $f$ is a soft continuous bijection from a soft compact space $(Y,\I',E')$ onto $(Z,\I,E)$. It remains to check that $f$ is soft open. Let $(G,E')$ be soft $\I'$-open. Then $(G^c,E')$ is soft $\I'$-closed. Since $(Y,\I',E')$ is soft compact space, by Lemma, \ref{closedincompactiscompact}, $(G^c,E')$ is soft $\I'$-compact. From the soft continuity of $f$, $f((G^c,E'))$ is soft $\I$-compact and by (2) $f((G^c,E'))$ is soft $\I$-closed. Since $f$ is bijective, $f((G^c,E'))=(f((G,E')))^c$ and so $f((G,E'))$ is soft open. Hence $f$ is a soft open function.

(3)$\implies$(1) Proposition \ref{maxcomp=con=homeo}.
\end{proof}

\begin{lem}\label{closed=t1}\cite[Theorem 4.1]{bayramov2018new}
Let $(Z,\I,E)$ be a soft topological space. If each singleton soft set is soft closed, then $(Z,\I,E)$ is soft $T_1$.
\end{lem}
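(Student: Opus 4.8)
The plan is to argue directly from the definition of soft $T_1$ in its soft-point formulation: given two distinct soft points, one must exhibit soft open sets each of which contains one of the points and misses the other. The natural candidates for these witnessing soft open sets are the complements of the two singleton soft sets, which are soft open precisely by the hypothesis.

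First I would fix two distinct soft points $u(e)$ and $v(e')$ over $Z$ and look at the singleton soft sets $\{u(e)\}$ and $\{v(e')\}$. By hypothesis both are soft $\I$-closed, so $\{u(e)\}^c$ and $\{v(e')\}^c$ are soft $\I$-open. From the explicit form $\{u(e)\}=\{(e,\{u\}),(e'',\emptyset):e''\in E,\ e''\neq e\}$ one reads off $(\{u(e)\}^c)(e)=Z\setminus\{u\}$ and $(\{u(e)\}^c)(e'')=Z$ for all $e''\neq e$.

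Next I would check the incidence relations. Since $u\notin Z\setminus\{u\}=(\{u(e)\}^c)(e)$, we get $u(e)\,\widetilde{\notin}\,\{u(e)\}^c$. For $v(e')$ I would split into cases: if $e'\neq e$ then $(\{u(e)\}^c)(e')=Z$, so $v\in(\{u(e)\}^c)(e')$; if $e'=e$ then distinctness of the two soft points forces $u\neq v$, hence $v\in Z\setminus\{u\}=(\{u(e)\}^c)(e')$. In either case $v(e')\,\widetilde{\in}\,\{u(e)\}^c$. Running the same computation with the roles of $u(e)$ and $v(e')$ interchanged yields $v(e')\,\widetilde{\notin}\,\{v(e')\}^c$ and $u(e)\,\widetilde{\in}\,\{v(e')\}^c$.

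To finish, I would set $(U,E)=\{v(e')\}^c$ and $(V,E)=\{u(e)\}^c$; these are soft $\I$-open with $u(e)\,\widetilde{\in}\,(U,E)$, $v(e')\,\widetilde{\notin}\,(U,E)$, $v(e')\,\widetilde{\in}\,(V,E)$ and $u(e)\,\widetilde{\notin}\,(V,E)$, which is exactly the soft $T_1$ separation property for this pair; as the pair was arbitrary, $(Z,\I,E)$ is soft $T_1$. I do not anticipate a real obstacle here — the content is bookkeeping about soft points and complements — so the only things that need care are the case split $e'=e$ versus $e'\neq e$ when verifying $v(e')\,\widetilde{\in}\,\{u(e)\}^c$, and being explicit that it is the soft-point version of soft $T_1$ (the one consistent with the stated hypothesis) that is in force.
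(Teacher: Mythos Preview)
Your argument is correct and is precisely the natural proof: the complements of the two singleton soft points serve as the separating soft open sets, and your case split on $e'=e$ versus $e'\neq e$ handles the distinctness condition exactly as the paper's notion of distinct soft points requires. Note, however, that the paper does not supply its own proof of this lemma; it is quoted with a citation to \cite[Theorem 4.1]{bayramov2018new}, so there is no in-paper argument to compare against --- your proof simply fills in what the paper leaves to the reference.
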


The above is true in various soft point theories, see \cite{al2020comments}

\begin{thm}\label{maxcomcor}
If $(Z,\I,E)$ is a maximal soft compact space, then $(Z,\I,E)$ is soft $T_1$.
\end{thm}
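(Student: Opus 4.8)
The plan is to reduce the statement to two earlier results: the characterization of maximal soft compactness in Theorem \ref{maxcompactthm} (equivalence of (1) and (2)), and the sufficient condition for soft $T_1$ in Lemma \ref{closed=t1}. By Lemma \ref{closed=t1}, it suffices to show that every singleton soft set $\{z(e)\}$ is soft closed in $(Z,\I,E)$. Since $\I$ is maximal soft compact, Theorem \ref{maxcompactthm} tells us that the family of soft $\I$-closed sets coincides with the family of soft $\I$-compact sets; hence it is enough to prove that each $\{z(e)\}$ is soft compact.

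For that, I would take an arbitrary soft open cover $\mathscr{U}\widetilde{\subseteq}\I$ of $\{z(e)\}$. By definition of a cover, some $(U,E)\in\mathscr{U}$ contains the soft point $z(e)$, i.e.\ $z\in U(e)$. Since $\{z(e)\}$ has value $\{z\}$ at the parameter $e$ and value $\emptyset$ at every other parameter, one gets $\{z(e)\}\widetilde{\subseteq}(U,E)$, so $\{(U,E)\}$ is already a finite subcover of $\mathscr{U}$. Therefore $\{z(e)\}$ is soft compact, and consequently soft $\I$-closed. As $z(e)$ was arbitrary, every singleton soft set is soft closed, and Lemma \ref{closed=t1} gives that $(Z,\I,E)$ is soft $T_1$. (Alternatively, Proposition \ref{maxcomp=CK} can be cited in place of Theorem \ref{maxcompactthm} at the same point.)

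There is essentially no technical obstacle here; the argument is short once the right lemmas are invoked. The only place deserving a line of care is to make sure the notion of ``soft compact set'' used in Theorem \ref{maxcompactthm} (finite subcover for every cover by soft open sets of the whole space) is the one applied in the covering argument above, which it is, since a single soft open set containing the soft point already engulfs the entire singleton soft set. The substance of the theorem is thus carried entirely by the maximal-soft-compactness characterization, with the present proof only supplying the trivial observation that singleton soft sets are soft compact.
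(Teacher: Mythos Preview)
Your proof is correct and follows essentially the same approach as the paper: both show singleton soft sets are soft compact, then invoke the maximal-soft-compactness characterization (the paper uses Proposition~\ref{maxcomp=CK}, you cite Theorem~\ref{maxcompactthm} but note the alternative) to conclude they are soft closed, and finish with Lemma~\ref{closed=t1}. The only difference is that you spell out the trivial covering argument for singletons, which the paper leaves implicit.
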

\begin{proof}
One can easily show that each singleton soft set is soft compact. Since $(Z,\I,E)$ is maximal soft compact, by Proposition \ref{maxcomp=CK}, each singleton soft set is soft closed and Lemma \ref{closed=t1} finishes the proof.
\end{proof}

\begin{defn}\cite{al2019comment}
A soft set $(A,E)$ from $(Z,\I,E)$ is called stable if there exists a subset $Y$ of $Z$ such that $A(e)=Y$ for each $e\in E$.
\end{defn}

\begin{defn}
We call a soft topological space $(Z,\I,E)$ stable if each soft open is stable. 
\end{defn}

\begin{lem}\label{stable}
If $(Z,\I,E)$ is a stable soft $T_2$-space, then each soft compact is soft closed.
\end{lem}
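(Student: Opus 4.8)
The plan is to mimic the classical argument that in a Hausdorff space compact sets are closed, but carried out with soft points and using stability to control the parameter set. Let $(A,E)$ be a soft compact subset of the stable soft $T_2$-space $(Z,\I,E)$. To show $(A,E)$ is soft closed, I would show $(A^c,E)$ is soft open by producing, for an arbitrary soft point $z(e)\,\widetilde{\in}\,(A^c,E)$, a soft open set $(W,E)$ with $z(e)\,\widetilde{\in}\,(W,E)\,\widetilde{\subseteq}\,(A^c,E)$.

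First I would fix $z(e)\,\widetilde{\in}\,(A^c,E)$ and separate it from each soft point of $(A,E)$. For every soft point $v(e')\,\widetilde{\in}\,(A,E)$, soft $T_2$-ness gives disjoint soft open sets $(U_{v(e')},E)$ and $(V_{v(e')},E)$ with $z(e)\,\widetilde{\in}\,(U_{v(e')},E)$ and $v(e')\,\widetilde{\in}\,(V_{v(e')},E)$. The family $\{(V_{v(e')},E):v(e')\,\widetilde{\in}\,(A,E)\}$ is a soft open cover of $(A,E)$; by soft compactness of $(A,E)$ there is a finite subfamily $(V_{v_1(e_1')},E),\dots,(V_{v_n(e_n')},E)$ covering $(A,E)$. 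Put $(W,E)=\widetilde{\bigcap}_{k=1}^{n}(U_{v_k(e_k')},E)$, a soft open set containing $z(e)$. The key point is that $(W,E)$ is disjoint from $(A,E)$: each $(U_{v_k(e_k')},E)$ is disjoint from the corresponding $(V_{v_k(e_k')},E)$, and the $(V_{v_k(e_k')},E)$ together cover $(A,E)$, so any soft point of $(A,E)\,\widetilde{\cap}\,(W,E)$ would lie in some $(V_{v_k(e_k')},E)\,\widetilde{\cap}\,(U_{v_k(e_k')},E)=\widetilde{\Phi}$, a contradiction. Hence $z(e)\,\widetilde{\in}\,(W,E)\,\widetilde{\subseteq}\,(A^c,E)$.

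Here is where stability enters, and I expect it to be the main subtlety rather than a deep obstacle: a priori the above only shows that for each soft point $z(e)$ missing $(A,E)$ there is a soft open neighbourhood inside $(A^c,E)$, and taking the union of all such $(W,E)$ over all soft points of $(A^c,E)$ already exhibits $(A^c,E)$ as a union of soft open sets, hence soft open, so $(A,E)$ is soft closed. In fact stability of $\I$ is not strictly needed for the closedness conclusion once one argues pointwise with soft points; but it is the natural hypothesis ensuring that the soft $T_2$ separation behaves well (for instance, that the separating soft open sets can be taken stable, and that $(A^c,E)$ being covered by soft open sets forces it open without parameter-set pathologies). I would therefore present the pointwise-union argument as the clean route, invoking stability where the paper's conventions require the ambient open sets to be stable, and remark that the finite-intersection step is legitimate since finite intersections of stable soft open sets are stable soft open.

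The only place to be careful is the very last line: concluding that $(A^c,E)$ is soft open from the fact that every soft point of it has a soft open neighbourhood contained in it. This uses that $(A^c,E)$ equals the union of those neighbourhoods, which holds because every soft point of $(A^c,E)$ is captured, and a soft set is determined by the soft points it contains. So the anticipated ``hard part'' is purely bookkeeping with soft points and the stability convention, not a genuine mathematical difficulty; the mathematical content is entirely the compactness-plus-Hausdorff separation packaged above.
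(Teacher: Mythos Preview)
Your argument is the classical Hausdorff proof carried over to soft points, and under the soft-point conventions fixed in this paper it goes through as you wrote it. The paper, however, does not argue directly at all: its entire proof is the sentence ``It follows from Lemma~7 and Theorem~8 in \cite{al2019comment}.'' So the two routes are genuinely different: yours is self-contained and exposes the mechanism (separate, extract a finite subcover, intersect), whereas the paper outsources the work to external results in which the stability hypothesis is what makes the classical implication survive the passage to soft sets.

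Your hedging about stability is appropriate but slightly mis-aimed. With the soft points $z(e)$ used here, every soft set is the union of its soft points, so your final ``union of neighbourhoods'' step is legitimate and no extra hypothesis is needed at that point. The reason stability appears in the statement is historical and definitional: the cited reference \cite{al2019comment} is precisely a paper documenting that classical facts like ``compact $\Rightarrow$ closed in $T_2$'' can fail in soft topology under other conventions, and stability is the hypothesis that repairs this in that setting. So rather than saying stability ``is not strictly needed,'' it would be cleaner to say your direct argument uses only the soft-point form of $T_2$ adopted in the present paper, and to note that the paper retains the stability hypothesis because it is quoting results proved under that assumption.
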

\begin{proof}
It follows from Lemma 7 and Theorem 8 in \cite{al2019comment}.
\end{proof}

\begin{thm}
If $(Z,\I,E)$ is a stable soft compact $T_2$-space, then $(Z,\I,E)$ is maximal soft compact.
\end{thm}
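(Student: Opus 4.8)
The plan is to reduce the statement to the characterization of maximal soft compactness obtained in Proposition~\ref{maxcomp=CK} (equivalently, the equivalence (1)$\iff$(2) of Theorem~\ref{maxcompactthm}): a soft compact space $(Z,\I,E)$ is maximal soft compact precisely when its family of soft $\I$-closed sets coincides with its family of soft $\I$-compact sets. Since $(Z,\I,E)$ is assumed to be soft compact, it therefore suffices to establish this equality of the two families under the additional hypotheses that $\I$ is stable and soft $T_2$.

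First I would verify the inclusion ``soft closed $\Rightarrow$ soft compact'', which uses only soft compactness of $(Z,\I,E)$: by Lemma~\ref{closedincompactiscompact}, every soft $\I$-closed subset of $\widetilde{Z}$ is soft $\I$-compact. Next I would verify the reverse inclusion ``soft compact $\Rightarrow$ soft closed'', and this is the step that consumes the stability and $T_2$ assumptions: since $(Z,\I,E)$ is a stable soft $T_2$-space, Lemma~\ref{stable} applies directly and gives that every soft $\I$-compact subset is soft $\I$-closed. Combining the two inclusions, the soft $\I$-closed sets and the soft $\I$-compact sets are one and the same family, and then Proposition~\ref{maxcomp=CK} immediately yields that $\I$ is maximal soft compact.

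I do not anticipate a genuine obstacle here; the argument is essentially an assembly of Lemma~\ref{closedincompactiscompact}, Lemma~\ref{stable}, and Proposition~\ref{maxcomp=CK}. The only point that warrants care is quoting the two lemmas in the form matching the notion of ``soft compact set'' used in Proposition~\ref{maxcomp=CK} (a soft subset which is soft compact as a soft subspace), so that the resulting equality of families is literally hypothesis (2) of Theorem~\ref{maxcompactthm}; once that bookkeeping is settled, the conclusion is a one-line deduction.
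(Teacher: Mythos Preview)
Your argument is correct, but it follows a different route from the paper. The paper argues by contradiction without invoking Proposition~\ref{maxcomp=CK}: assuming $\I$ is not maximal soft compact, it picks a strictly finer soft compact topology $\I^*$, a set $(Y,E)$ that is soft $\I^*$-open but not soft $\I$-open, and forms the s-extension $\I_0=\I[Y]$; since $\I\widetilde{\subset}\I_0\widetilde{\subset}\I^*$, the topology $\I_0$ is soft compact, and then Lemma~\ref{compactthm} forces $(Y^c,E)$ to be soft $\I$-compact, whence soft $\I$-closed by Lemma~\ref{stable}, contradicting the choice of $(Y,E)$. Your approach instead cashes in the already-proved characterization: Lemma~\ref{closedincompactiscompact} and Lemma~\ref{stable} give both inclusions between the families of soft $\I$-closed and soft $\I$-compact sets, and Proposition~\ref{maxcomp=CK} finishes. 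Your route is shorter and avoids the s-extension machinery entirely; the paper's route is more self-contained in that it does not rely on Proposition~\ref{maxcomp=CK}, and it illustrates once more how Lemma~\ref{compactthm} drives the theory.
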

\begin{proof}
If  $\I$ is not maximal soft compact, there exists a soft compact topology $\I^*$ on $Z$ such that $\I\widetilde{\subset}\I^*$. Pick a set $(Y,E)$ to be soft $\I^*$-open but not soft $\I$-open. Let $\I_0=\I[Y]$. Then $\I\widetilde{\subset}\I_0\widetilde{\subset}\I^*$ and so $\I_0$ is soft compact. By Lemma \ref{compactthm}, $(Y^c,E)$ is a soft $\I$-compact set and by Lemma \ref{stable}, it is soft $\I$-closed. This implies  $(Y,E)$ is soft $\I$-open, which contradicts to the choice of $(Y,E)$. Hence $(Z,\I,E)$ is maximal soft compact.
\end{proof}

\subsection{Maximal soft connected topologies}
\begin{defn}\cite{lin2013soft}
A soft topological space $(Z, \I, E)$ is called soft connected if it cannot be written as a union of two disjoint soft open sets. Otherwise, it called soft disconnected.
\end{defn}
\begin{defn}\label{Maxconnected}
Let $(Z,\I,E)$ be a soft connected space. Then $\I$ is called maximal soft connected if any soft topology finer than $\I$ is not soft connected. 
\end{defn}

We start by giving some examples of maximal soft connected spaces.
\begin{exam}\label{ex3}
Let $\mathbb{R}$ be the set of reals and let $E=\{e_1,e_2\}$. The soft topological space $(\mathbb{R}, \I, E)$ is maximal soft connected, where $\I=\{Y_E\widetilde{\subseteq}\widetilde{\mathbb{R}}:0(e_1)\widetilde{\in}Y_E\}\widetilde{\bigcup}\{\widetilde{\widetilde{\Phi}}\}$.
\end{exam}

\begin{exam}\label{ex1}\cite[Example 3.3]{ameen2021alghour}
The soft topological space $(\mathbb{R}, \I, E)$ is maximal soft connected, where $\I=\{Y_E\widetilde{\subseteq}\widetilde{\mathbb{R}}:0(e_1)\widetilde{\notin}Y_E\}\widetilde{\bigcup}\{\widetilde{\mathbb{R}}\}$, $\mathbb{R}$ is the set of reals, and $E=\{e_1,e_2\}$. 
\end{exam}

\begin{rem}
From the above example, we shall remark that the maximal soft topology dealt with in this note does not have a nice connection with maximal crisp topology \cite{vaidya1947treatise}, in general, due to the concept of soft point we select. The soft topology given in Example \ref{ex1} is maximal and the crisp topology of $e_1$ is maximal, while the crisp topology of $e_2$ is not maximal.
\end{rem} 

\begin{thm}\label{maxconnthm}
Let $(Y,E)$ be a soft subset of a soft topological space $(Z,\I,E)$. If $(Y,E), (Y^c,E)$ are soft connected (as soft subspaces) and either of them is soft open, then $\I$ is maximal soft connected.
\end{thm}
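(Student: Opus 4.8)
The plan is to argue by contradiction: suppose $\I$ is soft connected (which I first need to verify from the hypotheses) but not maximal soft connected, so there is a strictly finer soft connected topology $\I^*$ with $\I\widetilde{\subset}\I^*$. Then there is a soft set $(V,E)$ that is soft $\I^*$-open but not soft $\I$-open, and I want to manufacture a soft $\I^*$-disconnection of $\widetilde{Z}$, contradicting the soft connectedness of $\I^*$. The natural place to look for such a disconnection is along the decomposition $\widetilde{Z}=(Y,E)\,\widetilde{\cup}\,(Y^c,E)$; the extra $\I^*$-open set $(V,E)$ must interact nontrivially with exactly one of these pieces, and I will use the connectedness of each piece as a subspace to pin down that interaction.

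The key steps, in order, would be: (1) Without loss of generality assume $(Y,E)$ is the one that is soft $\I$-open (the argument is symmetric). First observe that $\I$ itself is soft connected — if $\widetilde{Z}=(A,E)\,\widetilde{\cup}\,(B,E)$ were a soft $\I$-disconnection, intersecting with $(Y,E)$ and with $(Y^c,E)$ and using that each subspace is soft connected forces one of $(A,E),(B,E)$ to contain $(Y,E)$ and the other to contain $(Y^c,E)$, or one of them to be empty on a piece; chasing this shows $(A,E)$ or $(B,E)$ is $\widetilde{\Phi}$, a contradiction. (Actually since the theorem hypothesizes $\I$ has "the property $\Gamma$ = soft connected" implicitly via Definition \ref{Maxconnected}, I may instead simply assume it, but it is cleaner to derive it.) (2) Now take $\I^*\supsetneq\I$ soft connected and $(V,E)$ soft $\I^*$-open, not soft $\I$-open. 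Consider the soft subspace topologies $\I^*_Y$ and $\I^*_{Y^c}$. Since $(Y,E)$ is soft $\I$-open, $\I_Y\widetilde{\subseteq}\I^*_Y$, and similarly on the complement; the subspaces $(Y,\I_Y,E)$ and $(Y^c,\I_{Y^c},E)$ are soft connected by hypothesis. (3) The crux: show that $(Y,\I^*_Y,E)$ and $(Y^c,\I^*_{Y^c},E)$ must both still be soft connected. This is where I expect the main work — one needs a lemma that if a finer topology is connected and a certain open piece's subspace connectedness is preserved, then... Here is the cleaner route: $(V,E)\,\widetilde{\cap}\,(Y,E)$ and $(V,E)\,\widetilde{\cap}\,(Y^c,E)$ are soft $\I^*$-open in the respective subspaces. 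Because $(Y,E)$ is soft $\I$-open (hence soft $\I^*$-open), the soft set $\big((V,E)\,\widetilde{\cap}\,(Y,E)\big)\,\widetilde{\cup}\,(Y^c,E)$ and the soft set $\big((V^c,E)\,\widetilde{\cap}\,(Y,E)\big)\,\widetilde{\cup}$ (something) should be arranged into a soft $\I^*$-disconnection of $\widetilde{Z}$ unless $(V,E)$ already restricts to a trivial (empty or full) subset of $(Y,E)$; and symmetrically for $(Y^c,E)$. If $(V,E)$ restricts trivially to both pieces, then $(V,E)$ is a union of (a sub-collection among) $\widetilde{\Phi},(Y,E),(Y^c,E),\widetilde{Z}$ — but $(Y,E)$ is soft $\I$-open, and one then checks $(Y^c,E)$ would have to be soft $\I$-open too for $(V,E)$ to be a nontrivial new set, which combined with connectedness of $\I$ is impossible; so $(V,E)$ is soft $\I$-open, contradicting the choice of $(V,E)$.

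The main obstacle is step (3): carefully producing the soft $\I^*$-clopen partition from $(V,E)$ and the two connected pieces, making sure the sets I write down are genuinely soft $\I^*$-open (this uses that $(Y,E)$ is soft $\I$-open, not merely soft $\I^*$-open — that asymmetry is exactly why "either of them is soft open" appears in the hypothesis) and genuinely disjoint with union $\widetilde{Z}$. I would isolate this as a short internal claim: \emph{if $(Y,E)$ is soft $\I$-open and $(Y,E),(Y^c,E)$ are soft connected subspaces of $(Z,\I,E)$, and $\I^*$ is any soft connected topology finer than $\I$, then every soft $\I^*$-open set restricts to $\widetilde{\Phi}$ or the whole piece on at least one of $(Y,E),(Y^c,E)$, and in fact is soft $\I$-open.} Once this claim is proven by the disconnection construction above, the theorem follows immediately. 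Handling soft points and the parameter set $E$ correctly throughout (all unions/intersections are $\widetilde{\cup},\widetilde{\cap}$ over the fixed $E$) is routine but must be done with care.
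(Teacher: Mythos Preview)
Your proposal has a genuine gap at step (3), and in fact the statement as literally written cannot be proved: take $Z=\mathbb{R}$, $E=\{e\}$, $\I$ the usual (soft) topology, and $(Y,E)$ with $Y(e)=(0,\infty)$. Then $(Y,E)$ is soft open and soft connected, and $(Y^c,E)$ (with $Y^c(e)=(-\infty,0]$) is soft connected, so your hypotheses hold; yet $\I$ is \emph{not} maximal soft connected, since the soft set $(Q,E)$ with $Q(e)=\mathbb{Q}$ is soft $\I$-dense and hence $\I[Q]$ is a strictly finer soft connected topology by the paper's Lemma~\ref{remark2.2}. Consequently your ``internal claim'' in step (3) --- that every soft $\I^*$-open set is already soft $\I$-open --- is false, and the point where your argument actually breaks is the disconnection construction: from a nontrivial $(V,E)\widetilde{\cap}(Y,E)$ you try to form $\big((V,E)\widetilde{\cap}(Y,E)\big)\widetilde{\cup}(Y^c,E)$ together with its complement $(V^c,E)\widetilde{\cap}(Y,E)$ as an $\I^*$-clopen partition of $\widetilde{Z}$, but $(V^c,E)$ is only soft $\I^*$-closed, so $(V^c,E)\widetilde{\cap}(Y,E)$ need not be soft $\I^*$-open. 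No rearrangement repairs this, because the theorem is false as stated.

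What the paper's proof actually establishes is the \emph{converse} implication: if $(Y,E)$ and $(Y^c,E)$ are both soft connected and \emph{neither} is soft $\I$-open, then $\I$ is \emph{not} maximal soft connected (equivalently: in a maximal soft connected space, whenever $(Y,E)$ and $(Y^c,E)$ are both soft connected, one of them must be soft open). The paper's argument proceeds by showing that the simple extension $\I^*=\I[Y]$ is soft connected (the word ``disconnected'' in the paper's second sentence is a typo): assuming a soft $\I^*$-disconnection $(G,E),(H,E)$ exists, one restricts to $(Y^c,E)$ and to $(Y,E)$, uses that $\I^*_{Y^c}=\I_{Y^c}$ and $\I^*_{Y}=\I_{Y}$, and contradicts the soft connectedness of one of the two subspaces. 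This is an entirely different direction from the one you attempted; your step (1) and your overall contradiction scheme are aimed at the wrong target.
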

\begin{proof}
Assume $(Y,E)\ne\widetilde{\Phi}\ne(Y^c,E)$, otherwise the result trivially holds. Suppose that $(Y,E), (Y^c,E)$ are not soft $\I$-open. Then, by taking $\I^*=\I[Y]$, we obtain a disconnected soft topology $\I^*$ such that $\I\widetilde{\subset}\I^*$. Therefore, there are disjoint soft $\I^*$-open sets $(G,E), (H,E)$ such that $\widetilde{Z}=(G,E)\widetilde{\bigcap}(H,E)$. W.L.O.G assume that $(G,E)\widetilde{\cap}(Y^c,E)\ne\widetilde{\Phi}\ne(H,E)\widetilde{\cap}(Y^c,E)$. Then $(G,E)\widetilde{\cap}(Y^c,E), (H,E)\widetilde{\cap}(Y^c,E)$ are disjoint soft $\I^*_{Y^c}$-open and $(G,E)\widetilde{\cap}(Y^c,E)\widetilde{\bigcup} (H,E)\widetilde{\cap}(Y^c,E)\\=(Y^c,E)$. By Remark 2.2 (vii) in \cite{ameen2021alghour}, $(G,E)\widetilde{\cap}(Y^c,E), (H,E)\widetilde{\cap}(Y^c,E)$ are disjoint soft $\I_{Y^c}$-open which means that $(Y^c,\I_{Y^c}, E)$ is not soft connected, a contradiction. Therefore, either $(G,E)\widetilde{\subseteq}(Y,E)$ or $(H,E)\widetilde{\subseteq}(Y,E)$. Suppose  $(G,E)\widetilde{\subseteq}(Y,E)$. If $(G,E)=(Y,E)$, then $(H,E)=(Y^c,E)$, which is a soft $\I$-open set, but that is not possible (as it is soft $\I^*$-open). Therefore, we must have $(H,E)\widetilde{\cap}(Y,E)\ne\widetilde{\Phi}$. Hence $(G,E)\widetilde{\bigcup} (H,E)\widetilde{\cap}(Y,E)=(Y,E)$, where $(G,E), (H,E)\widetilde{\cap}(Y,E)$ are soft $\I^*_Y$-open. But, by Remark 2.2 (vii) in \cite{ameen2021alghour} soft $\I^*_Y$-open and soft $\I_Y$-open are similar, thus $(Y,\I_Y,E)$ is soft disconnected, again contradiction. The result is proved.
\end{proof}

\begin{thm}
Let $(Z,\I,E)$ be a soft connected space. If $\I$ is maximal, then $(Z,\I,E)$ is soft $T_0$. 
\end{thm}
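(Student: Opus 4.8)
\emph{Plan.} I would argue by contradiction. Suppose $(Z,\I,E)$ is maximal soft connected but fails to be soft $T_0$, so there exist two distinct soft points $u(e_1),v(e_2)$ (recall this means $u\ne v$ or $e_1\ne e_2$) that are topologically indistinguishable: a soft $\I$-open set contains $u(e_1)$ if and only if it contains $v(e_2)$. Set $(Y_0,E):=\widetilde{Z}\backslash\{v(e_2)\}$, that is, $Y_0(e_2)=Z\backslash\{v\}$ and $Y_0(e)=Z$ for every $e\ne e_2$. Then $u(e_1)\widetilde{\in}(Y_0,E)$ (clear if $e_1\ne e_2$, and if $e_1=e_2$ then $u\ne v$), whereas $v(e_2)\widetilde{\notin}(Y_0,E)$. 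Hence $(Y_0,E)$ cannot be soft $\I$-open, for otherwise it would be a soft $\I$-open set containing $u(e_1)$ but not $v(e_2)$, contradicting indistinguishability. So the s-extension $\I^{*}:=\I[(Y_0,E)]$ is a soft topology strictly finer than $\I$, and by Definition \ref{Maxconnected} it must be soft disconnected; fix nonempty disjoint soft $\I^{*}$-open sets $(G,E),(H,E)$ with $(G,E)\widetilde{\cup}(H,E)=\widetilde{Z}$.

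The key structural step I would then prove is: \emph{every soft $\I^{*}$-open set containing $v(e_2)$ is already soft $\I$-open.} Using that the base of $\I[(Y_0,E)]$ is $\{(O,E):(O,E)\widetilde{\in}\I\}\cup\{(O,E)\widetilde{\cap}(Y_0,E):(O,E)\widetilde{\in}\I\}$, any soft $\I^{*}$-open set $(V,E)$ can be written, by distributivity, as $(V,E)=(P,E)\widetilde{\cup}\bigl[(Q,E)\widetilde{\cap}(Y_0,E)\bigr]$ with $(P,E),(Q,E)\widetilde{\in}\I$. If $v(e_2)\widetilde{\in}(V,E)$, then since $v(e_2)\widetilde{\notin}(Y_0,E)$ we must have $v(e_2)\widetilde{\in}(P,E)$, and a component-by-component comparison at $e_2$ (where $Y_0(e_2)=Z\backslash\{v\}$) using $v\in P(e_2)$ gives $(V,E)=(P,E)\widetilde{\cup}(Q,E)\widetilde{\in}\I$.

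Finally I would locate the two soft points inside the disconnection. Relabel so that $v(e_2)\widetilde{\in}(G,E)$; by the structural step $(G,E)$ is soft $\I$-open, and disjointness gives $v(e_2)\widetilde{\notin}(H,E)$. If $u(e_1)\widetilde{\in}(H,E)$, then $(G,E)$ is a soft $\I$-open set containing $v(e_2)$ but not $u(e_1)$, impossible by indistinguishability. So $u(e_1)\widetilde{\in}(G,E)$, hence $u(e_1)\widetilde{\notin}(H,E)$. Writing $(H,E)=(P,E)\widetilde{\cup}\bigl[(Q,E)\widetilde{\cap}(Y_0,E)\bigr]$ with $(P,E),(Q,E)\widetilde{\in}\I$, a component check at $e_1$ (again using $u\ne v$ in the case $e_1=e_2$) yields $u(e_1)\widetilde{\notin}(Q,E)$; indistinguishability then forces $v(e_2)\widetilde{\notin}(Q,E)$, i.e. $v\notin Q(e_2)$, so $(Q,E)\widetilde{\cap}(Y_0,E)=(Q,E)$ and therefore $(H,E)=(P,E)\widetilde{\cup}(Q,E)\widetilde{\in}\I$. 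Now $(G,E),(H,E)$ are nonempty disjoint soft $\I$-open sets with union $\widetilde{Z}$, contradicting the soft connectedness of $(Z,\I,E)$. In every case we obtain a contradiction, so $(Z,\I,E)$ is soft $T_0$.

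I expect the main obstacle to be purely bookkeeping rather than conceptual: carefully handling the parameter-wise case split $e_1=e_2$ versus $e_1\ne e_2$ when testing membership of $u(e_1),v(e_2)$ in $(Y_0,E)$ and in the sets $(P,E),(Q,E)$, and verifying the structural step from the explicit description of the base of $\I[(Y_0,E)]$.
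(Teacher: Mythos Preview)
Your proof is correct and follows essentially the same strategy as the paper's: assume failure of $T_0$, pick two topologically indistinguishable soft points, form an s-extension that is strictly finer, invoke maximality to obtain a soft disconnection $(G,E)\widetilde{\cup}(H,E)=\widetilde{Z}$, and then argue that both halves are already soft $\I$-open, contradicting soft connectedness. The only genuine difference is the choice of extending set: the paper takes $\I^{*}=\I[\{z(e')\}]$ (the singleton itself), whereas you take $\I^{*}=\I[\widetilde{Z}\backslash\{v(e_2)\}]$ (its complement). With the paper's choice the basic $\I^{*}$-open sets have the form $(P,E)\widetilde{\cup}\bigl((Q,E)\widetilde{\cap}\{z(e')\}\bigr)$, so the second term is at most a single soft point and the reduction to $\I$-openness is almost immediate via indistinguishability (and is partly outsourced to a cited remark); with your choice the second term is co-singleton and you must do the parameterwise bookkeeping you describe, but the payoff is a fully self-contained argument that does not appeal to the external reference. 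Either route works.
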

\begin{proof}
Suppose $(Z,\I,E)$ is not soft $T_0$. Then there are soft points $y(e),z(e')$ with $y(e)\ne z(e')$ such that $y(e),z(e')\widetilde{\in}(G,E)$ for all soft $\I$-open sets $(G,E)$. Therefore, $y(e)\widetilde{\in}\Cl(\{z(e')\})$ and $z(e')\widetilde{\in}\Cl(\{y(e)\})$. Let $\I^*=\I[\{z(e')\}]$. Then $\I\widetilde{\subset}\I^*$ and so $\I^*$ is not soft connected as $\I$ is maximal soft connected. Therefore, there exist disjoint soft $\I^*$-open sets $(G,E),(H,E)$ such that $(G,E)\widetilde{\cup}(H,E)=\widetilde{Z}$. Thus either $y(e),z(e')\widetilde{\in}(G,E)$ or $y(e),z(e')\widetilde{\in}(H,E)$. If $y(e),z(e')\widetilde{\in}(G,E)$, by Remark 2.2 (ii) in \cite{ameen2021alghour}, each soft $\I^*$-open set containing $y(e)$ is also soft $\I$-open, so $(G,E)$ is soft $\I$-open. Therefore, there is a soft $\I$-open $(U,E)$ such that $y(e)\widetilde{\in}(U,E)\widetilde{\subseteq}(G,E)$. But $(U,E)$ is a soft $\I$-open containing $z(e')$ such that $(U,E)\widetilde{\subseteq}(G,E)$. Thus $(G,E)$ is a soft $\I$-open containing each of its points, and similarly for $(H,E)$. This implies that $(Z,\I,E)$ is soft disconnected, which is impossible.
\end{proof}

\begin{prop}\label{l1}
If $(Y,E)$ is a soft open connected subset of a maximal soft connected space $(Z,\I,E)$, then $(Y,\I_Y,E)$ is maximal soft connected.
\end{prop}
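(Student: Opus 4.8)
The plan is to argue by contradiction. Suppose $(Y,\I_Y,E)$ is \emph{not} maximal soft connected. Since it is soft connected by hypothesis, there is then a soft topology $\mathcal{J}$ on the soft set $(Y,E)$ that is properly finer than $\I_Y$ yet still soft connected. From $\mathcal{J}$ I would manufacture a soft topology on $Z$ properly finer than $\I$, and then invoke the maximality (and soft connectedness) of $\I$ to reach a contradiction.

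\textbf{Step 1 (lift $\mathcal{J}$ to $Z$).} Since $(Y,E)$ is soft $\I$-open, every member of $\mathcal{J}$ is a soft subset of the soft $\I$-open set $(Y,E)$. I expect
\[
\I^{*}=\bigl\{(U,E)\,\widetilde{\cup}\,(V,E):(U,E)\,\widetilde{\in}\,\I,\ (V,E)\,\widetilde{\in}\,\mathcal{J}\bigr\}
\]
to be a soft topology on $Z$ (equivalently, the soft topology generated by $\I\,\widetilde{\cup}\,\mathcal{J}$ in the sense of Definition \ref{generatingtopology}). Closure under arbitrary unions is clear; for finite intersections the key point is that $(U,E)\,\widetilde{\cap}\,(V,E)=\bigl((U,E)\,\widetilde{\cap}\,(Y,E)\bigr)\,\widetilde{\cap}\,(V,E)\,\widetilde{\in}\,\mathcal{J}$, because $(U,E)\,\widetilde{\cap}\,(Y,E)\,\widetilde{\in}\,\I_Y\,\widetilde{\subseteq}\,\mathcal{J}$. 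Plainly $\I\,\widetilde{\subseteq}\,\I^{*}$, and the inclusion is proper: any $(V,E)\,\widetilde{\in}\,\mathcal{J}\setminus\I_Y$ lies in $\I^{*}$ but not in $\I$, since a soft $\I$-open soft subset of $(Y,E)$ would already be soft $\I_Y$-open. I would then check that the soft relative topology $\I^{*}_{Y}$ equals $\mathcal{J}$ exactly (forward inclusion from $\I_Y\,\widetilde{\subseteq}\,\mathcal{J}$, reverse by taking the $\I$-component to be $\widetilde{\Phi}$), so that $(Y,\I^{*}_{Y},E)$ is soft connected.

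\textbf{Step 2 (derive disconnectedness of $\I$).} By maximality of $\I$, the finer space $(Z,\I^{*},E)$ is soft disconnected: $\widetilde{Z}=(G,E)\,\widetilde{\cup}\,(H,E)$ for disjoint nonempty soft $\I^{*}$-open $(G,E),(H,E)$. Restricting this to $(Y,E)$ and using that $(Y,\I^{*}_{Y},E)=(Y,\mathcal{J},E)$ is soft connected, one trace is null; say $(H,E)\,\widetilde{\cap}\,(Y,E)=\widetilde{\Phi}$, so $(Y,E)\,\widetilde{\subseteq}\,(G,E)$. Write $(G,E)=(U_G,E)\,\widetilde{\cup}\,(V_G,E)$ and $(H,E)=(U_H,E)\,\widetilde{\cup}\,(V_H,E)$ with $(U_G,E),(U_H,E)\,\widetilde{\in}\,\I$ and $(V_G,E),(V_H,E)\,\widetilde{\in}\,\mathcal{J}$ (hence $(V_G,E),(V_H,E)\,\widetilde{\subseteq}\,(Y,E)$). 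Then $(V_H,E)\,\widetilde{\subseteq}\,(Y,E)\,\widetilde{\cap}\,(H,E)=\widetilde{\Phi}$, so $(H,E)=(U_H,E)\,\widetilde{\in}\,\I$; and from $(V_G,E)\,\widetilde{\subseteq}\,(Y,E)\,\widetilde{\subseteq}\,(G,E)$ one gets $(G,E)=(U_G,E)\,\widetilde{\cup}\,(Y,E)\,\widetilde{\in}\,\I$, again using that $(Y,E)$ is soft $\I$-open. Thus $(G,E),(H,E)$ are disjoint nonempty soft $\I$-open sets covering $\widetilde{Z}$, contradicting soft connectedness of $(Z,\I,E)$.

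The one genuinely delicate point is the bookkeeping in Step 1 — confirming that $\I^{*}$ is a soft topology and, above all, that its restriction to $(Y,E)$ is \emph{exactly} $\mathcal{J}$; note that openness of $(Y,E)$ is used twice, once to get closure of $\I^{*}$ under intersections and once to recover $\mathcal{J}$ as a relative topology. Once the normal form ``soft $\I^{*}$-open $=$ (soft $\I$-open) $\widetilde{\cup}$ (soft $\mathcal{J}$-open)'' is established, Step 2 is purely formal manipulation of soft inclusions. It is worth remarking that soft connectedness of $(Y^{c},E)$ plays no role in this argument.
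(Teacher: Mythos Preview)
Your proof is correct. Both your argument and the paper's follow the same architecture---assume $(Y,\I_Y,E)$ is not maximal, lift to a strictly finer soft topology on $Z$, extract a soft disconnection $(G,E),(H,E)$ of $\widetilde{Z}$, then show that both pieces already lie in $\I$---but the constructions of the lifted topology differ. The paper selects a single soft set $(X,E)\widetilde{\in}\mathcal{J}\setminus\I_Y$ and works with the s-extension $\I^{*}=\I[X]$, relying on properties of s-extensions from \cite{ameen2021alghour} (in particular Remark~2.2 there) to analyse $\I^{*}$ and its restriction $\I^{*}_Y=\I_Y[X]$. You instead push the entire finer topology $\mathcal{J}$ up to $Z$ at once via $\I^{*}=\{(U,E)\,\widetilde{\cup}\,(V,E):(U,E)\,\widetilde{\in}\,\I,\ (V,E)\,\widetilde{\in}\,\mathcal{J}\}$ and verify directly that this is a soft topology with $\I^{*}_Y=\mathcal{J}$. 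Your route is more self-contained (no appeal to external lemmas) and yields an explicit normal form for soft $\I^{*}$-open sets, which makes Step~2 completely transparent; the paper's route has the minor conceptual advantage of only adjoining one soft set, so that it fits into the s-extension framework used throughout. Your closing remark about $(Y^{c},E)$ seems aimed at Theorem~\ref{maxconnthm} rather than at this proposition, where no such hypothesis is present.
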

\begin{proof}
If $(Y,\I_Y,E)$ is not maximal soft connected, then there exists a soft connected topology $\T$ on $Y$ such that $\I_Y\widetilde{\subset}\T$. Let $(X,E)\widetilde{\subseteq}(Y,E)$ be a soft $\T$-open set but not soft $\T_Y$-open. If $\T^*=\I_Y[X]$, then $\I_Y\widetilde{\subset}\T^*$ and so $\T^*$ is soft connected. If $\I^*=\I[X]$, then $\I\widetilde{\subset}\I^*$ but $\I^*$ cannot be soft connected. 
Therefore, there are disjoint soft $\I^*$-open sets $(G,E),(H,E)$ such that $(G,E)\widetilde{\cup}(H,E)=\widetilde{Z}$. Then either $(Y,E)\widetilde{\subseteq}(G,E)$ or $(Y,E)\widetilde{\subseteq}(H,E)$, differently $(Y,\T^*,E)=(Y,\I^*_Y,E)$ will be soft disconnected by the disjoint soft $\T^*$-open sets $(G,E)\widetilde{\cap}(Y,E),(H,E)\widetilde{\cap}(Y,E)$ (impossible). Hence, we assume $(Y,E)\widetilde{\subseteq}(G,E)$. By Remark 2.2 (ii), for all soft points $z(e)$ that belongs to some soft $\I^*$-open set but does not belong to any soft $\I$-open, we have $$z(e)\widetilde{\in}(X,E)\widetilde{\subseteq}(Y,E)\widetilde{\subseteq}(G,E).$$ This means that $(G,E)$ is soft $\I$-open as $(Y,E)$ is soft $\I$-open. Similarly, one can show that $(H,E)$ is soft $\I$-open. Hence $(G,E), (H,E)$ are disjoint soft $\I$-open and $(G,E)\widetilde{\cup}(H,E)=\widetilde{Z}$, which proves that $(Z,\I,E)$ is not soft connected, a contradiction.
\end{proof}

\begin{prop}\label{l2}
	If $(Y,E)$ is a soft closed connected subset of a maximal soft connected space $(Z,\I,E)$, then $(Y,\I_Y,E)$ is maximal soft connected.
\end{prop}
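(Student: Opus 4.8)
The plan is to imitate the proof of Proposition~\ref{l1}, systematically exchanging \quotes{open} for \quotes{closed}, the only new twist being in the final step. Assume $(Y,\I_Y,E)$ is not maximal soft connected; we may take $\widetilde{\Phi}\ne(Y,E)\ne\widetilde{Z}$, the excluded cases being trivial. Choose a soft connected topology $\T$ on $Y$ with $\I_Y\widetilde{\subset}\T$ and, by strictness, a soft set $(D,E)\widetilde{\subseteq}(Y,E)$ which is soft $\T$-closed but not soft $\I_Y$-closed. Since a soft subset of the soft $\I$-closed set $(Y,E)$ is soft $\I_Y$-closed exactly when it is soft $\I$-closed, $(D,E)$ is not soft $\I$-closed, so its soft complement $(D^{c},E)$ in $\widetilde{Z}$ is not soft $\I$-open. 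Put $\I^{*}=\I[D^{c}]$ and $\T^{*}=\I_Y[(Y,E)\setminus(D,E)]$. Then $\I_Y\widetilde{\subset}\T^{*}\widetilde{\subseteq}\T$ — the last inclusion because $\I_Y\widetilde{\subseteq}\T$ and $(Y,E)\setminus(D,E)$ is soft $\T$-open — so $\T^{*}$ is soft connected, being coarser than $\T$; and $\I\widetilde{\subset}\I^{*}$, so by maximality $\I^{*}$ is soft disconnected. Thus there are nonempty disjoint soft $\I^{*}$-open sets $(A,E),(B,E)$ with $(A,E)\widetilde{\cup}(B,E)=\widetilde{Z}$, and each of them, being the complement of the other, is also soft $\I^{*}$-closed.

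A direct computation (of the type behind Remark~2.2 in \cite{ameen2021alghour}, invoked the same way in the proofs of Proposition~\ref{l1} and Theorem~\ref{maxconnthm}) yields $\I^{*}_{Y}=\T^{*}$, so $(Y,\I^{*}_{Y},E)$ is soft connected. Hence $(A,E)\widetilde{\cap}(Y,E)$ and $(B,E)\widetilde{\cap}(Y,E)$ are disjoint soft $\I^{*}_{Y}$-open sets covering $(Y,E)$, and one of them is null; say $(B,E)\widetilde{\cap}(Y,E)=\widetilde{\Phi}$, i.e. $(B,E)\widetilde{\subseteq}(Y^{c},E)$ and $(Y,E)\widetilde{\subseteq}(A,E)$.

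It now suffices to show that $(B,E)$ is soft $\I$-clopen: being nonempty and proper (its complement $(A,E)$ contains $(Y,E)\ne\widetilde{\Phi}$), it then contradicts soft connectedness of $(Z,\I,E)$, completing the proof. Every soft $\I^{*}$-open set has the form $(U,E)\widetilde{\cup}\bigl((V,E)\widetilde{\cap}(D^{c},E)\bigr)$ with $(U,E),(V,E)\widetilde{\in}\I$; since $(B,E)\widetilde{\subseteq}(Y^{c},E)\widetilde{\subseteq}(D^{c},E)$, the term $(V,E)\widetilde{\cap}(D^{c},E)$ collapses to $(V,E)\widetilde{\cap}(Y^{c},E)$, which is soft $\I$-open because $(Y^{c},E)$ is soft $\I$-open — this is the point where closedness of $(Y,E)$ is used — so $(B,E)$ is soft $\I$-open. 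Dually, every soft $\I^{*}$-closed set has the form $(P,E)\widetilde{\cup}\bigl((Q,E)\widetilde{\cap}(D,E)\bigr)$ with $(P,E),(Q,E)$ soft $\I$-closed, and since $(D,E)\widetilde{\subseteq}(Y,E)$ while $(B,E)\widetilde{\cap}(Y,E)=\widetilde{\Phi}$, the term $(Q,E)\widetilde{\cap}(D,E)$ is null; hence $(B,E)=(P,E)$ is soft $\I$-closed.

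I expect this last paragraph to be the one delicate point. In Proposition~\ref{l1} both halves of the disconnection turn out soft $\I$-open, whereas here one must pull \emph{both} soft openness and soft closedness out of the single half $(B,E)$ that avoids $(Y,E)$ — openness from the \quotes{open side} of $\I^{*}=\I[D^{c}]$ (using that $(Y,E)$ is soft closed, so $(Y^{c},E)$ is soft open) and closedness from the \quotes{closed side} of $\I^{*}$ (using $(D,E)\widetilde{\subseteq}(Y,E)$). The remaining ingredients — selecting $(D,E)$, identifying $\I^{*}_{Y}=\T^{*}$, and the routine soft-set manipulations — mirror Proposition~\ref{l1}.
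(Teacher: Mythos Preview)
Your proof is correct and follows essentially the same route as the paper's: both extend $\I$ by the same set (your $(D^{c},E)$ equals the paper's $(W,E)=\widetilde{Z}\setminus((Y,E)\setminus(X,E))$ under the substitution $(X,E)=(Y,E)\setminus(D,E)$), identify $\I^{*}_{Y}$ with the simple extension of $\I_Y$, use connectedness of the subspace to force $(Y,E)$ into one half of the $\I^{*}$-disconnection, and then produce an $\I$-disconnection. The only cosmetic difference is that the paper shows both halves $(G,E),(H,E)$ are soft $\I$-open, while you show the single half $(B,E)$ is soft $\I$-clopen --- these are equivalent, and your bookkeeping (explicitly selecting the finer connected topology $\T$ before choosing $(D,E)$) is in fact a bit cleaner than the paper's.
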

\begin{proof}
Assume $(Y,\I_Y,E)$ is not maximal soft connected. Take a set $(X,E)\widetilde{\subset}(Y,E)$ which is not soft $\I_Y$-open. If $\T=\I_Y[X]$, then $\I_Y\widetilde{\subset}\T$ and so $\T$ is soft connected. Let $(W,E)=\widetilde{Z}\backslash((Y,E)\backslash(X,E))$. Then, we have $(W,E)\widetilde{\cap}(Y,E)=(X,E)$. Therefore, $\I_Y[X]=\I^*_Y$, where $\I^*$ is an s-extension of $\I$ with respect to $(W,E)$ (i.e. $\I^*=\I[W]$). Since $\I$ is maximal soft connected and $\I\widetilde{\subset}\I^*$, then $\I^*$ is soft disconnected. Thus there exist disjoint soft $\I^*$-open sets $(G,E), (H,E)$ such that $(G,E)\widetilde{\cup}(H,E)=\widetilde{Z}$. Since $\I^*_Y$ is soft connected, then either $(Y,E)\widetilde{\subset}(G,E)$ or $(Y,E)\widetilde{\subset}(H,E)$. Suppose $(Y,E)\widetilde{\subset}(G,E)$. We consider two cases: (i) suppose $z(e)\widetilde{\in}(G,E)$. Since $(G,E)$ is soft $\I^*$-open, there are soft $\I$-open sets $(U,E), (V,E)$ such that $z(e)\widetilde{\in}(U,E)\widetilde{\cup}[(V,E)\widetilde{\cap}(W,E)]\widetilde{\subset}(G,E)$. If $z(e)\widetilde{\in}(U,E)]\widetilde{\subset}(G,E)$ and so $z(e)\widetilde{\in}\Int_{\I}((G,E))$. If $z(e)\widetilde{\in}(V,E)\widetilde{\cap}(W,E)\widetilde{\subset}(G,E)$, then $z(e)\widetilde{\in}(V,E)=(V,E)\widetilde{\cap}[(\widetilde{Z}\backslash (W,E))\widetilde{\cup}(W,E)]\widetilde{\subset}(G,E)$ as $[(V,E)\widetilde{\cap}(\widetilde{Z}\backslash(W,E))]\widetilde{\subset}\widetilde{Z}\backslash(W,E)=(Y,E)\backslash(X,E)\widetilde{\subseteq}(Y,E)\widetilde{\subseteq}(G,E)$. Again $z(e)\widetilde{\in}\Int_{\I}((G,E))$. Since $z(e)$ was arbitrarily taken, so $(G,E)$ is soft $\I$-open. (ii) suppose $z(e)\widetilde{\in}(H,E)$. Since $(H,E)$ is soft $\I^*$-open, there are soft $\I$-open sets $(U,E), (V,E)$ such that $z(e)\widetilde{\in}(U,E)\widetilde{\cup}[(V,E)\widetilde{\cap}(W,E)]\\ \widetilde{\subset}(H,E)$. If $z(e)\widetilde{\in}(U,E)]\widetilde{\subset}(H,E)$ and so $z(e)\widetilde{\in}\Int_{\I}((H,E))$. If $z(e)\widetilde{\in}(V,E)\widetilde{\cap}(W,E)$, then, since $z(e)\widetilde{\in}(H,E)\widetilde{\subseteq}\widetilde{Z}\backslash (Y,E)\widetilde{\subset}(W,E)$, so $z(e)\widetilde{\in}(V,E)\widetilde{\cap}[\widetilde{Z}\backslash(Y,E)]\widetilde{\subset}(V,E)\widetilde{\cap}(W,E)$. Since $\widetilde{Z}\backslash(Y,E)$ is soft $\I$-open, therefore $z(e)\widetilde{\in}\Int_{\I}((H,E))$. Thus $(H,E)$ is soft $\I$-open. This means that $(Z,\I,E)$ is not soft connected, which is impossible. Hence $(Y,\I_Y,E)$ must be maximal soft connected.
\end{proof}

\begin{defn}\cite{ilango2013soft}
A soft topological space $(Z,\I,E)$ is called soft submaximal if each soft $\I$-dense set is soft $\I$-open. 
\end{defn}

\begin{lem}\label{remark2.2}
Let $(Z,\I,E)$ be a soft connected space and let $\I^*=\I[Y]$ be an s-extension of $\I$ over $Z$. If $(Y,E)$ is soft $\I$-dense, then $(Z,\I^*,E)$ is soft connected
\end{lem}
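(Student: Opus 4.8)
The plan is to argue by contradiction, using the explicit shape of the soft $\I^*$-open sets. First I would record that, since finite intersections of members of $\I\widetilde{\cup}\{(Y,E)\}$ form a soft base for $\I^*=\I[Y]$ and $\I$ is closed under finite intersections, a soft base for $\I^*$ is $\I\widetilde{\cup}\{(U,E)\widetilde{\cap}(Y,E):(U,E)\widetilde{\in}\I\}$. Taking arbitrary unions, every soft $\I^*$-open set can therefore be written as $(U,E)\widetilde{\cup}[(V,E)\widetilde{\cap}(Y,E)]$ for suitable $(U,E),(V,E)\widetilde{\in}\I$.

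Now suppose, for contradiction, that $(Z,\I^*,E)$ is soft disconnected, say $\widetilde{Z}=(G,E)\widetilde{\cup}(H,E)$ with $(G,E),(H,E)$ disjoint, nonempty and soft $\I^*$-open. Fix representations $(G,E)=(U_1,E)\widetilde{\cup}[(V_1,E)\widetilde{\cap}(Y,E)]$ and $(H,E)=(U_2,E)\widetilde{\cup}[(V_2,E)\widetilde{\cap}(Y,E)]$ with all $U_i,V_i$ soft $\I$-open. The crucial step -- and the only place where soft density is used -- is the claim that $(V_1,E)\widetilde{\subseteq}(G,E)$. To see it, note that $(V_1,E)\widetilde{\cap}(Y,E)\widetilde{\subseteq}(G,E)$ and $(V_2,E)\widetilde{\cap}(Y,E)\widetilde{\subseteq}(H,E)$, so $(V_1,E)\widetilde{\cap}(V_2,E)\widetilde{\cap}(Y,E)\widetilde{\subseteq}(G,E)\widetilde{\cap}(H,E)=\widetilde{\Phi}$, whence $(V_1,E)\widetilde{\cap}(H,E)=(V_1,E)\widetilde{\cap}(U_2,E)$, a soft $\I$-open set. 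This set cannot meet $(Y,E)$: a soft point in $(V_1,E)\widetilde{\cap}(U_2,E)\widetilde{\cap}(Y,E)$ would lie in $(V_1,E)\widetilde{\cap}(Y,E)\widetilde{\subseteq}(G,E)$ and in $(U_2,E)\widetilde{\subseteq}(H,E)$ at once. Hence $(V_1,E)\widetilde{\cap}(U_2,E)$ is a soft $\I$-open subset of $(Y^c,E)$; but $(Y,E)$ being soft $\I$-dense gives $\Int_\I((Y^c,E))=(\Cl_\I((Y,E)))^c=\widetilde{\Phi}$, so $(V_1,E)\widetilde{\cap}(U_2,E)=\widetilde{\Phi}$, i.e. $(V_1,E)\widetilde{\cap}(H,E)=\widetilde{\Phi}$ and therefore $(V_1,E)\widetilde{\subseteq}(G,E)$.

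From the claim, $(G,E)=(U_1,E)\widetilde{\cup}[(V_1,E)\widetilde{\cap}(Y,E)]=(U_1,E)\widetilde{\cup}(V_1,E)$, which is soft $\I$-open; the symmetric argument (swapping the roles of $(G,E)$ and $(H,E)$) shows $(H,E)$ is soft $\I$-open. But then $\widetilde{Z}=(G,E)\widetilde{\cup}(H,E)$ is a decomposition of $\widetilde{Z}$ into two disjoint nonempty soft $\I$-open sets, contradicting the soft connectedness of $(Z,\I,E)$; hence $(Z,\I^*,E)$ is soft connected. I expect the main obstacle to be purely organizational: nailing down the soft base of the s-extension and then checking that the ``residual'' soft $\I$-open set $(V_1,E)\widetilde{\cap}(U_2,E)$ is genuinely disjoint from $(Y,E)$, so that soft density can be applied. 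The remaining manipulations of soft unions, intersections and complements are routine.
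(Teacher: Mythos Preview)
Your proof is correct. The paper, by contrast, does not argue the lemma at all: its entire proof is the sentence ``It follows from Theorem 3.17 \cite{ameen2021alghour},'' i.e.\ a bare citation of an external result. So your route is genuinely different in the sense that you supply a complete, self-contained argument where the paper supplies none. Your key steps---the normal form $(U,E)\widetilde{\cup}[(V,E)\widetilde{\cap}(Y,E)]$ for soft $\I[Y]$-open sets, the observation that $(V_1,E)\widetilde{\cap}(U_2,E)$ is a soft $\I$-open set disjoint from the soft $\I$-dense set $(Y,E)$ (hence empty), and the conclusion that a soft $\I^*$-clopen decomposition is already a soft $\I$-clopen decomposition---are exactly what one would expect a direct proof to look like, and they match the way the paper uses the same normal form elsewhere (e.g.\ in the proof of Proposition~\ref{l2}). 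The payoff of your approach is that the reader need not chase the reference; the cost is only a short paragraph of routine soft-set algebra.
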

\begin{proof}
It follows from Theorem 3.17 \cite{ameen2021alghour}.
\end{proof}

\begin{lem}\label{maxcontosubmax}
If $(Z,\I,E)$ is a maximal soft connected space, then $(Z,\I,E)$ is soft submaximal.
\end{lem}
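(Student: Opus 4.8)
The plan is to argue by contradiction using the s-extension machinery together with Lemma \ref{remark2.2}. Let $(Y,E)$ be an arbitrary soft $\I$-dense subset of $\widetilde{Z}$; the goal is to show that $(Y,E)$ is soft $\I$-open. Suppose, for contradiction, that $(Y,E)$ is not soft $\I$-open. Since a soft dense set is non-null (assuming $Z\neq\emptyset$) and, being non-open, is in particular a soft non-open subset over $Z$, the s-extension $\I^*=\I[Y]$ is well defined; by construction $(Y,E)$ is soft $\I^*$-open while it fails to be soft $\I$-open, so the inclusion $\I\widetilde{\subset}\I^*$ is proper.

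Next I would invoke Lemma \ref{remark2.2}: because $(Z,\I,E)$ is soft connected and $(Y,E)$ is soft $\I$-dense, the s-extension $(Z,\I^*,E)$ is again soft connected. But then $\I^*$ is a soft topology strictly finer than $\I$ that still has the soft connectedness property, which contradicts the assumption that $\I$ is maximal soft connected (Definition \ref{Maxconnected}). Hence the supposition is false, $(Y,E)$ must be soft $\I$-open, and since $(Y,E)$ was an arbitrary soft $\I$-dense set, $(Z,\I,E)$ is soft submaximal.

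I do not anticipate a genuine obstacle here: the only points that need care are the bookkeeping that legitimises the s-extension (namely that a dense non-open set is a soft non-open subset, so that $\I[Y]$ is defined and strictly refines $\I$) and the observation that \emph{a proper refinement preserving soft connectedness} is precisely what Definition \ref{Maxconnected} rules out. The substantive work has already been packaged into Lemma \ref{remark2.2} (equivalently Theorem 3.17 of \cite{ameen2021alghour}), so the present lemma reduces to this short contradiction argument.
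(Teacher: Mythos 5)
Your argument is correct and follows the same route as the paper: form the s-extension $\I^*=\I[Y]$ of the dense set, apply Lemma \ref{remark2.2} to see that $\I^*$ remains soft connected, and conclude from maximality that the dense set must already be soft $\I$-open. The only difference is that you phrase it as a contradiction (and explicitly note that the s-extension requires the set to be non-open), while the paper states the same argument directly; this is a cosmetic distinction, not a different proof.
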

\begin{proof}
Let $(D,E)$ be a soft $\I$-dense set over $Z$. By Lemma \ref{remark2.2}, $(Z,\I^*,E)$ is soft connected, where $\I^*=\I[D]$, but $\I$ is soft maximal, hence $\I=\I^*$. Thus $(D,E)$ must be a soft $\I$-open.
\end{proof}

\begin{thm}\label{subspaceconnected}
If $(Y,E)$ is a soft connected subset of a maximal soft connected space $(Z,\I,E)$, then $(Y,\I_Y,E)$ is maximal soft connected.
\end{thm}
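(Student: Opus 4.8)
The plan is to squeeze the arbitrary soft connected subset $(Y,E)$ between a soft closed connected set and a soft open connected set, so that Propositions \ref{l1} and \ref{l2} can be chained. The bridging object is the soft closure $(C,E):=\Cl_\I((Y,E))$.

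First I would verify that $(C,E)$ is itself soft connected. Supposing $(C,E)=(G,E)\widetilde{\cup}(H,E)$ with disjoint nonempty soft $\I_C$-open sets, I would write each piece as the trace on $(C,E)$ of a soft $\I$-open set and invoke the elementary fact that a nonempty soft open set meeting $\Cl_\I((Y,E))$ must meet $(Y,E)$; this would force $(G,E)\widetilde{\cap}(Y,E)$ and $(H,E)\widetilde{\cap}(Y,E)$ to be disjoint nonempty soft $\I_Y$-open sets covering $(Y,E)$, contradicting the soft connectedness of $(Y,E)$. I expect this step --- that the soft closure of a soft connected subspace is soft connected --- to be the main point requiring care, since it is the one ingredient not already packaged in the excerpt, and I would isolate it as a short auxiliary lemma.

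Since $(C,E)$ is soft $\I$-closed, Proposition \ref{l2} gives that $(C,\I_C,E)$ is maximal soft connected (in the degenerate case $(C,E)=\widetilde{Z}$ this is just the hypothesis on $(Z,\I,E)$), and hence $(C,\I_C,E)$ is soft submaximal by Lemma \ref{maxcontosubmax}. Next I would note that $(Y,E)$ is soft $\I_C$-dense in $(C,E)$, because $\Cl_C((Y,E))=\Cl_\I((Y,E))\widetilde{\cap}(C,E)=(C,E)$; submaximality then makes $(Y,E)$ soft $\I_C$-open. Thus $(Y,E)$ is a soft open soft connected subset of the maximal soft connected space $(C,\I_C,E)$, so Proposition \ref{l1} yields that $(Y,(\I_C)_Y,E)$ is maximal soft connected. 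Finally, since iterated soft subspaces compose, $(\I_C)_Y=\I_Y$, and the conclusion follows.

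Apart from the closure-connectedness lemma, the remaining ingredients are two routine structural facts --- that the soft closure computed inside a soft subspace is the trace of the ambient soft closure, and that the soft subspace relative to $(Y,E)\widetilde{\subseteq}(C,E)\widetilde{\subseteq}\widetilde{Z}$ is the same whether formed in one step or two --- neither deep, but both needed to make the chaining of Propositions \ref{l1} and \ref{l2} legitimate.
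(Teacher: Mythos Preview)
Your proposal is correct and follows essentially the same route as the paper: pass to the soft closure $(C,E)=\Cl_\I((Y,E))$, use Proposition~\ref{l2} to get $(C,\I_C,E)$ maximal soft connected, invoke Lemma~\ref{maxcontosubmax} (submaximality) to see that the dense subset $(Y,E)$ is soft $\I_C$-open, and then apply Proposition~\ref{l1}. You are simply more explicit than the paper about the auxiliary facts (closure of a soft connected set is soft connected, $\Cl_C((Y,E))=(C,E)$, and $(\I_C)_Y=\I_Y$), which the paper uses without comment.
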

\begin{proof}
Since $(Y,E)$ is soft connected, then $\Cl_\I((Y,E))$ is also soft connected and, by Proposition \ref{l2},  $\Cl_\I((Y,E))$ is maximal. By Lemma \ref{maxcontosubmax}, $(Y,E)$ is soft open in $\Cl_\I((Y,E))$ because $(Y,E)$ is soft dense in $\Cl_\I((Y,E))$. By Proposition \ref{l1}, $(Y,E)$ is maximal soft connected.
\end{proof}

\section*{Conclusion}
The growth of topology has been supported by the continual supply of classes of topological spaces, examples, and their properties and relations.
As a result, extending the area of soft topological spaces in the same way is significant. We have shown that the collection of all soft topologies on a non-empty set is a complete lattice. The minimal element in this lattice is the soft indiscrete topology, which is both soft compact and soft connected. One might ask the question of what be will be the structure of maximal soft compact and maximal soft connected topologies. The third section answers this question and gives some more properties of these topologies. We have characterized maximal soft compact topologies in terms of soft closed-compact subsets and soft homeomorphisms. It is shown that the class of maximal soft compact topologies contains the class of stable soft compact $T_2$ topologies and is contained in the class of soft $T_1$ topologies. Furthermore, we have seen that the class of maximal soft connected topologies is contained in the class of soft $T_0$ topologies. With the help of a simple extension of a soft topology, we have found that any soft connected topology relativised with a maximal soft connected is also maximal.

As part of future work, the following tasks are expected to be completed: 

$\bullet$ Different soft point theories can be applied to all the results presented in this paper.

$\bullet$ Recall that a soft topology $\I$ on $Z$ with the property $\Gamma$ is $\Gamma$-maximal if any soft topology finer than $\I$ does not have the property $\Gamma$. One can examine different soft topological properties in place of soft compact or soft connected, namely: soft countably compact, soft sequentially compact, soft paracompact, soft Menger, soft path-connected, soft J-topologies, and so on. 

$\bullet$ This work is done on soft topologies, one can work on different topological structures, like fuzzy soft topologies, supra topologies, infra topologies, etc.


\end{document}